\documentclass[a4paper,10pt,reqno]{amsart}
\usepackage{amsmath,amssymb,amsfonts,graphicx,epsfig,color,url}
\usepackage{latexsym,mathrsfs,cite}

\numberwithin{equation}{section}

\newtheorem{theorem}{Theorem}[section]
\newtheorem{proposition}[theorem]{Proposition}
\newtheorem{lemma}[theorem]{Lemma}

\theoremstyle{remark}

\usepackage{float}
\usepackage{mathtools}
\usepackage{hyperref} 
\usepackage{placeins}
\usepackage{comment}
\usepackage[title,titletoc,toc]{appendix}

\usepackage{graphicx}
\usepackage{subcaption}
\usepackage[labelformat=simple]{subcaption}

\allowdisplaybreaks

\title[Spectral analysis for QHD with nonlinear viscosity]{Spectral analysis of dispersive shocks for quantum hydrodynamics with nonlinear viscosity}
\author{Corrado Lattanzio and Delyan Zhelyazov}

\address[Corrado Lattanzio]{DISIM, Department of Information Engineering, Computer Science and Mathematics \\ University of L'Aquila, Italy}
\email{corrado@univaq.it}
\address[Delyan Zhelyazov]{DISIM, Department of Information Engineering, Computer Science and Mathematics \\ University of L'Aquila, Italy}\email{delyanatanasov.zhelyazov@univaq.it}

\begin{document}

\begin{abstract}
In this paper we investigate spectral stability of traveling wave solutions to 1-$D$ quantum hydrodynamics system with nonlinear viscosity in the   $(\rho,u)$, that is, density and velocity, variables. We derive a sufficient condition for the stability of the essential spectrum and we estimate the maximum modulus of eigenvalues with non-negative real part. In addition, we present numerical computations of the Evans function in sufficiently large domain of the unstable half-plane and show numerically that its winding number is (approximately) zero, thus giving a numerical evidence of point spectrum stability.
\end{abstract}

\keywords{quantum hydrodynamics, traveling waves, spectral stability, dispersive-diffusive shock waves}
\subjclass[2010]{76Y05, 35Q35}

\maketitle

\section{Introduction}
\label{Introduction}
The aim of this paper is to investigate stability properties of the following quantum hydrodynamics (QHD) system with nonlinear viscosity:
\begin{equation}
\label{eq_sys_n}
\left\{
\begin{array}{ll}
\rho_t+m_x=0,\\
m_t+\Big{(}\frac{m^2}{\rho}+p(\rho) \Big{)}_x=\epsilon \mu \rho \Big(\frac{m_{x}}{\rho}\Big)_x+\epsilon^2 k^2 \rho \Big(\frac{(\sqrt{\rho})_{xx}}{\sqrt{\rho}}\Big{)}_x.
\end{array}
\right.
\end{equation}
Here $\rho \geq 0$ is the density, $m = \rho u$ is the momentum, where $u$ denotes the fluid velocity, and $0 < \epsilon \ll 1$ and $\mu$, $k>0$ are constants. Moreover, $\epsilon \mu$ and $\epsilon^2 k^2 $ are the viscosity and dispersion coefficients, respectively, and $p(\rho) = \rho^\gamma$ with $\gamma \geq 1$ is the pressure. The form of the dispersive term is known as the Bohm potential, while the nonlinear viscosity chosen here appears in the theory of superfluidity; see, for instance,  \cite{Khalatnikov}, page 109. This term  describes the interactions between  a super fluid and a normal fluid; in addition, it can also be interpreted as describing the interactions of the fluid with a background.
The first studies of models with dispersive terms are \cite{Sagdeev, Gurevich}; see also \cite{Gurevich1, Nov, Hoefer}. Moreover, quantum hydrodynamic systems have been considered from a mathematical perspective in \cite{AM1,AM2,AMtf,AMDCDS,AS,Michele1,Michele,DM,DM1,DFM,GLT,BGL-V19}.

Specifically, in what follows we shall deal with traveling wave solutions, or \emph{dispersive shocks}, for the system \eqref{eq_sys_n}, namely solutions depending on the ratio $(x-st)/\epsilon$, where the constant $s$ stands for the speed of the traveling wave, with given end states at $\pm\infty$. The existence of such solutions, under appropriate conditions on the end states, is investigated in \cite{Zhelyazov1}. 
More precisely, for strictly positive end states for the density, the corresponding profile stays away from vacuum (in $\rho$). Therefore, the velocity $u$ is well defined and the system can be recast in the $(\rho,u)$ variables. This reformulation is also justified by the fact that the QHD system is related to the Schr\"odinger equation, and the velocity can be written in terms of the phase of the associated wavefunction as $u = \phi_x$; see for instance \cite{Gasser}.
Hence, we divide the second equation of \eqref{eq_sys_n} by $\rho$ to obtain
\begin{align}
&\rho_t+(\rho u)_x=0, \label{eq_sys1_1}\\*
&\frac{(\rho u)_t}{\rho}+\frac{1}{\rho}\Big(\rho u^2 + p(\rho) \Big)_x=\epsilon \mu \Big(\frac{(\rho u)_{x}}{\rho}\Big)_x+\epsilon^2 k^2 \Big(\frac{(\sqrt{\rho})_{xx}}{\sqrt{\rho}}\Big{)}_x. \label{eq_sys1_2}
\end{align}
Let us define the enthalpy $h(\rho)$ by 
 \begin{equation}\label{eq:ent}
 h(\rho) = \begin{dcases}
 \ln \rho, & \gamma = 1\\
 \frac{\gamma}{\gamma-1}\rho^{\gamma-1}, & \gamma>1;
 \end{dcases}
 \end{equation}
see, for instance,  \cite{Gasser}. Then $h$   satisfies the identity
\begin{equation}
h(\rho)_x=\frac{1}{\rho}(p(\rho))_x\nonumber
\end{equation} 
and  the momentum equation can be simplified by using the continuity equation as follows:
\begin{align*}
&\frac{(\rho u)_t}{\rho}+\frac{1}{\rho}(\rho u^2)_x=\frac{\rho_t u}{\rho}+u_t + \frac{1}{\rho}\rho_x u^2 + (u^2)_x\nonumber\\
&=-\frac{(\rho u)_x u}{\rho}+u_t + \frac{1}{\rho}\rho_x u^2 + (u^2)_x=u_t + \frac{(u^2)_x}{2}.
\end{align*}
As a consequence, the system \eqref{eq_sys1_1}-\eqref{eq_sys1_2} can be rewritten in conservative form using the velocity and the enthalpy as follows:
\begin{align}
&\rho_t+(\rho u)_x=0,\label{eq_sys2_1}\\
&u_t + \frac{(u^2)_x}{2}+(h(\rho))_x=\epsilon \mu \Big(\frac{(\rho u)_{x}}{\rho}\Big)_x+\epsilon^2 k^2 \Big(\frac{(\sqrt{\rho})_{xx}}{\sqrt{\rho}}\Big{)}_x. \label{eq_sys2_2}
\end{align}
In the present work, we shall study the spectrum of the linearization of \eqref{eq_sys2_1}-\eqref{eq_sys2_2} around traveling wave profiles
\begin{equation}\label{eq:profintr}
\rho(t,x)=P\Big(\frac{x-s t}{\epsilon}\Big)^2,\mbox{ }u(t,x)=U\Big(\frac{x-st}{\epsilon}\Big).
\end{equation}
As a final remark, it is worth observing that the present spectral analysis applies also to non-monotone shocks.

Stability analysis of traveling wave solutions of partial differential equations is a widely  studied problem. In particular, for the case of this kind of hydrodynamic models involving dispersion terms, we recall here \cite{Humpherys}, where the spectral stability of traveling wave profiles for the $p-$system with real viscosity and linear capillarity has been discussed. Moreover, spectral analysis of the linearization around dispersive shocks for a variant of the QHD system \eqref{eq_sys_n} with linear viscosity can be found in \cite{Zhelyazov}, and the related Evans function computations in \cite{LMZ2020}.

The remaining part of this paper is organized as follows.
In Section \ref{linearization}  we show that the essential spectrum of the linearized operator around a profile is stable for subsonic or sonic end states. In Section \ref{point_spectrum} we estimate the maximum modulus of possible eigenvalues with non-negative real part, giving an explicit bound for the constant, and using this bound we perform numerics about the Evans function, providing numerical evidence for point spectrum stability of a non-monotone profile.

\section{Linearization and essential spectrum}\label{linearization}
We start by performing a linearization of system \eqref{eq_sys2_1}-\eqref{eq_sys2_2} around a profile \eqref{eq:profintr} with end states 
\begin{equation*}
\rho^{\pm}=\lim_{y\rightarrow \pm \infty}P(y)^2,\mbox{ }u^{\pm}=\lim_{y\rightarrow \pm \infty}U(y),
\end{equation*}
 For the sake of completeness, we state here  the existence theory for such profiles established in  \cite{Zhelyazov1}. To this end, let us recall that the Rankine--Hugoniot  conditions for a shock $(\rho^{\pm}, u^\pm;s)$ of the underlying  system 
 \begin{align}
&\rho_t+(\rho u)_x=0, \label{euler1} \\
&u_t + \frac{(u^2)_x}{2}+(h(\rho))_x=0,
\label{euler2}
\end{align}
 read
\begin{equation}\label{eq:RH}
\begin{aligned}
s((P^2)^+-(P^2)^-) =(P^2 u)^+ - (P^2 u)^-,\\
s(u^+-u^-)=\Big(\frac{u^2}{2}+h(P^2)\Big)^+-\Big(\frac{u^2}{2}+h(P^2)\Big)^-.
\end{aligned}
\end{equation}
Moreover, the characteristic speeds $\lambda_{1,2}(\rho,u)$ of the  hyperbolic system \eqref{euler1}-\eqref{euler2} are given by
\begin{equation*}
\lambda_1(W) = u - c_s(\rho),\mbox{ }\lambda_2(W) = u + c_s(\rho),
\end{equation*}
where we used the notation $c_s(\rho) = \sqrt{\rho h'(\rho)}\geq0$ for the sound speed.  Indeed, 
from the definition of the   enthalpy in \eqref{eq:ent} we readily obtain
 \begin{equation*}
 h'(\rho) = \begin{dcases}
 \frac{1}{\rho}, & \gamma = 1\\
 {\gamma}\rho^{\gamma-2}, & \gamma>1.
 \end{dcases}
 \end{equation*}
Therefore, $h'(\rho)\geq 0$, and in fact $h'(\rho)> 0$ for any $\rho>0$, 
 and the sound speed $c_s(\rho)$ is well defined and non-negative for any $\rho$, and  strictly positive for $\rho>0$.
Then, we recall that a discontinuity $(\rho^{\pm}, u^\pm;s)$  verifyng  the Rankine-Hugoniot conditions \eqref{eq:RH} is a Lax $k$--shock, $k=1,2$, if
\begin{equation*}
\lambda_k(\rho^{+}, u^+)<s<\lambda_k(\rho^{-}, u^- ).
\end{equation*}
Moreover, the state $(\rho^\pm,u^\pm)$ is referred to as supersonic (resp.\ subsonic; sonic) if $|u^\pm| > c_s(\rho^\pm)$ (resp.\ $|u^\pm| < c_s(\rho^\pm)$; $|u^\pm| = c_s(\rho^\pm)$). We are now ready to state the main existence result for profiles to \eqref{eq_sys2_1}-\eqref{eq_sys2_2} proved in \cite{Zhelyazov1}. 
\begin{theorem}\label{theo:exprof}
Suppose the end states $(\rho^{\pm}$, $u^{\pm})$ and the speed $s$ satisfy the Rankine--Hugoniot conditions \eqref{eq:RH} with $\rho^{\pm}>0$, and $(\rho^{\pm}$, $u^{\pm}; s)$ defines
\begin{enumerate}
\item[\textit{(i)}]
 a Lax 2--shock with a subsonic right state;
\item[\textit{(ii)}]
  a Lax 1--shock with a subsonic left state. 
\end{enumerate}
Then there exists a traveling wave profile connecting 
$(\rho^{-}$, $u^{-})$ to $(\rho^{+}$, $u^{+})$.
\end{theorem}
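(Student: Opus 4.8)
The plan is to reduce \eqref{eq_sys2_1}--\eqref{eq_sys2_2} under the traveling-wave ansatz to a planar autonomous ODE and to realize the profile as a heteroclinic orbit. With $y=(x-st)/\epsilon$ and $\rho=P(y)^2,\ u=U(y)$, equation \eqref{eq_sys2_1} integrates once to the flux identity $P^2(U-s)=j$ for a constant $j$, so that $U=s+j/P^2$; moreover $j\neq0$, since $j=0$ contradicts the Lax conditions. After this substitution every term of \eqref{eq_sys2_2} is a total $y$-derivative; using the flux identity the viscous term becomes $2\mu s\,(P'/P)'$, and integrating once, multiplying by $P$ and eliminating $U$ yields the scalar equation
\begin{equation*}
k^2 P''+2\mu s\,P'=f(P),\qquad f(P):=\frac{j^2}{2P^3}+P\,h(P^2)-C\,P,
\end{equation*}
with integration constants $j,C$; imposing that $P^\pm:=\sqrt{\rho^\pm}$ be rest points is precisely the Rankine--Hugoniot system \eqref{eq:RH}, and $\epsilon$ has dropped out. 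Writing $Q=P'$ we obtain $P'=Q,\ k^2 Q'=f(P)-2\mu s\,Q$, with equilibria $(P^\pm,0)$, and the sought profile is a heteroclinic orbit from $(P^-,0)$ to $(P^+,0)$.

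The next step is to classify the rest points. Using $f(P^\pm)=0$, $c_s(\rho)^2=\rho h'(\rho)$ and $u^\pm-s=j/(P^\pm)^2$, a short computation gives $f'(P^\pm)=2\big(c_s(\rho^\pm)^2-(u^\pm-s)^2\big)$, while the linearization at $(P^\pm,0)$ has characteristic equation $k^2\nu^2+2\mu s\,\nu-f'(P^\pm)=0$. In case \textit{(i)}, the inequalities $\lambda_1(\rho^-,u^-)<s<\lambda_2(\rho^-,u^-)$ (the Lax conditions at the left state) give $(u^--s)^2<c_s(\rho^-)^2$, hence $f'(P^-)>0$ and $(P^-,0)$ is a saddle; on the other hand $s>\lambda_2(\rho^+,u^+)=u^++c_s(\rho^+)$, which is positive because the right state is subsonic (so $u^+>-c_s(\rho^+)$), yields both $s>0$ and $(u^+-s)^2>c_s(\rho^+)^2$, so that $f'(P^+)<0$ and, the trace $-2\mu s/k^2$ being negative, $(P^+,0)$ is an asymptotically stable node or focus.

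To build the connection I would use a Lyapunov/LaSalle argument. Let $V$ be an antiderivative of $-f$ and $\mathcal E(P,Q):=\tfrac12 k^2 Q^2+V(P)$, so that $\dot{\mathcal E}=-2\mu s\,Q^2\le 0$ — here the sign $s>0$ is essential. From the formula for $f$ one sees that $f>0$ near $P=0^+$ (the term $j^2/(2P^3)$ dominates, $j\neq0$), that $f>0$ also for $P$ large, and that $P^\pm$ are the only zeros of $f$ in $(0,\infty)$; consequently $V$ decreases from $+\infty$ at $0^+$ to a strict local minimum at $P^+$ and then rises to a strict local maximum at $P^-$, in particular $P^+<P^-$. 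Hence the bounded component $\mathcal K$ of $\{\mathcal E<V(P^-)\}$ lying to the left of $P^-$ contains $(P^+,0)$ and is squeezed between the vacuum barrier and the saddle; it is positively invariant since $\mathcal E$ is non-increasing and strictly decreasing off $\{Q=0\}$. The branch of the unstable manifold of the saddle $(P^-,0)$ heading toward smaller $P$ enters $\mathcal K$, and LaSalle's invariance principle then forces it to converge to the unique equilibrium of $\overline{\mathcal K}$ other than $(P^-,0)$, namely $(P^+,0)$. This heteroclinic orbit gives the profile through $\rho=P^2,\ u=s+j/P^2$; $P$ is bounded below by a positive constant on $\mathcal K$, so the density stays away from vacuum. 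Finally, case \textit{(ii)} reduces to case \textit{(i)} via the reflection $(x,u,s)\mapsto(-x,-u,-s)$, which leaves \eqref{eq_sys2_1}--\eqref{eq_sys2_2} invariant while interchanging the two sets of hypotheses.

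The reduction and the linear classification are routine; I expect the core difficulty to be the global step — checking that $f$ has no spurious zeros and, above all, that the chosen branch of the saddle's unstable manifold really does enter the trapping region $\mathcal K$ (rather than escaping toward large $P$) — which is precisely where the Lax and subsonicity hypotheses enter.
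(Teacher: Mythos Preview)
The paper does not prove Theorem~\ref{theo:exprof}; it merely quotes the result from \cite{Zhelyazov1} (``proved in \cite{Zhelyazov1}''; ``see \cite{Zhelyazov1} for details''). So there is no in-paper proof to compare against. That said, your sketch is the standard phase--plane argument for this class of problems and is consistent with the hints the paper gives about the cited proof: the scalar reduction $k^2P''+2\mu s\,P'=f(P)$, with $f'(P^+)<0$, is exactly what underlies the paper's later remark that the profile is non-monotone when $s\mu/k<\sqrt{-f'(P^+)}$ (see Section~\ref{numerics}).

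A few small points to tighten. First, you invoke ``the Lax conditions at the left state'' $\lambda_1(\rho^-,u^-)<s<\lambda_2(\rho^-,u^-)$, but in this paper a Lax $2$--shock is defined only by $\lambda_2(\rho^+,u^+)<s<\lambda_2(\rho^-,u^-)$. The missing inequality $s>\lambda_1(\rho^-,u^-)$ does hold, but you should derive it: from $s-u^+>c_s(\rho^+)>0$ one gets $j=\rho^+(u^+-s)<0$, hence $u^--s=j/\rho^-<0$, i.e.\ $s>u^->\lambda_1(\rho^-,u^-)$. Second, the claim that $P^\pm$ are the \emph{only} positive zeros of $f$ is true but deserves one line: the equation $f(P)=0$ is equivalent to $g(P):=j^2/(2P^4)+h(P^2)=C$, and $g$ is strictly convex with a unique critical point because $P\mapsto P^6 h'(P^2)$ is strictly increasing for all $\gamma\ge1$; hence $g=C$ has at most two solutions. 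Third, for the LaSalle step you should also rule out that the $\omega$--limit set is $(P^-,0)$; this follows immediately because $\mathcal E$ has strictly decreased below $V(P^-)$ along the unstable branch. With these verifications supplied, your argument is complete, and the reflection $(x,u,s)\mapsto(-x,-u,-s)$ indeed swaps cases \textit{(i)} and \textit{(ii)} as you claim.
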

It is worth to observe that the resulting profile may be non-monotone in $\rho$, depending on the magnitude of the ratio $\mu/k$, yet it stays away from vacuum; see \cite{Zhelyazov1} for details.

Changing the variables $\tau=t/\epsilon$, $y=(x-st)/\epsilon$, and denoting by $R(y)=P(y)^2$ and by $(\rho,u)$ the deviation from $(R,U)$, we obtain the following  full linearized  operator around the profile  
\begin{equation}
\label{operator_Lnd}L
\begin{bmatrix}
\rho\\
u
\end{bmatrix}
\\=\begin{bmatrix}
s \rho_y  -(R u + U \rho)_y\\
s u_y - (U u)_y - (\frac{dh}{dR}(R) \rho)_y  \\
 + \hfill \mu\big{(}(R^{-1}(R u + U \rho)_y)_y-(R^{-2} (R U)_y \rho)_y\big{)} 
  +k^2 L_Q \rho
\end{bmatrix},
\end{equation}
where
\begin{equation*}
L_{Q} \rho=\frac{1}{2}(R^{-1/2}(R^{-1/2} \rho)_{yy})_y-\frac{1}{2}(R^{-3/2}(R^{1/2})_{yy} \rho)_y,
\end{equation*}
and associated eigenvalue problem  
\begin{equation}
\label{eq_variable_coeff}
\lambda \begin{bmatrix}
\rho\\
u
\end{bmatrix} = L\begin{bmatrix}
\rho\\
u
\end{bmatrix}.
\end{equation}
With the notation
\begin{equation*}
R^{\pm}=\lim_{y\rightarrow \pm \infty}R(y),\mbox{ }U^{\pm}=\lim_{y\rightarrow \pm \infty}U(y)
\end{equation*}
for the end states, 
the asymptotic operators  at $\pm\infty$ for \eqref{operator_Lnd}  are given by
\begin{equation*}
L_{\pm\infty}
\begin{bmatrix}
\rho\\
u
\end{bmatrix}
=\begin{bmatrix}
(s-U^{\pm}) \rho'-R^{\pm} u'\\
(s-U^{\pm})u'-\frac{dh}{dR}(R^{\pm})\rho'+\mu\Big{(}u''+\frac{U^{\pm}}{R^{\pm}}\rho''\Big{)}+\frac{k^2}{2}\frac{\rho'''}{R^{\pm}}
\end{bmatrix},
\end{equation*}
where $'$ denotes $d/dy$. We  rewrite the eigenvalue problem  associated to the asymptotic operators 
\begin{equation*}
\lambda \begin{bmatrix}
\rho\\
u
\end{bmatrix} = L_{\pm\infty}\begin{bmatrix}
\rho\\
u
\end{bmatrix}
\end{equation*}
as the following  first order system
\begin{equation}
V'=M^{\pm}V.
\label{system_at_infinity}
\end{equation}
In \eqref{system_at_infinity}, 
the limit matrices are given by
\begin{equation}
\label{mat_Mn}
M^{\pm} = \begin{bmatrix}
0 & 0 & 1 & 0\\
-\frac{\lambda}{R^\pm} & 0 & \frac{s-U^\pm}{R^\pm} & 0\\
0 & 0 & 0 & 1\\
\frac{2(s-U^\pm)\lambda}{k^2} & \frac{2 R^\pm \lambda}{k^2} & \frac{2}{k^2}(R^\pm \frac{dh}{dR}(R^\pm) - (s-U^\pm)^2 + \mu \lambda) & -\frac{2 s \mu}{k^2}
\end{bmatrix}
\end{equation}
and $V=[\rho,u,u_1,u_2]^T$ with $\rho' = u_1$ and $u_1' = u_2$.
\subsection{Essential spectrum and consistent splitting}
\label{section_essential_spectrum}
The spectrum of the linearized operator $L$ consists of the essential spectrum and the point spectrum; in this section, we shall investigate the stability of the former.  In particular, we shall obtain sufficient conditions on the end states so that  essential spectrum is confined on the (stable) left half-plane $\Re\lambda\leq0$. To this end, 
let us consider the characteristic equation $\det(\nu Id-M^\pm)=0$ of $M^\pm$, that is
\begin{equation}
\label{char_equation}
\nu^4+\frac{2 s \mu}{k^2}\nu^3+\frac{2}{k^2}((s-U^\pm)^2- R^\pm h'(R^\pm)-\lambda \mu)\nu^2+\frac{4(U^\pm-s)}{k^2}\lambda \nu+\frac{2 \lambda^2}{k^2}=0.
\end{equation}
Setting $\nu = i \xi$, $\xi \in \mathbb{R}$, in \eqref{char_equation} and dividing by $2/k^2$, we obtain the   dispersion relation:
\begin{equation}
\label{disper_rel2}
\lambda^2 + (\mu \xi^2 - 2 i \xi (s-U^\pm))\lambda + (R^\pm h'(R^\pm)-(s-U^\pm)^2)\xi^2+\frac{k^2}{2}\xi^4 - i s \mu \xi^3=0.
\end{equation}
\begin{proposition}\label{prop:1}
If the end states are subsonic or sonic, then the curves $\lambda(\xi)$ solving \eqref{disper_rel2} are in the closed left half-plane. Moreover, if $\xi \neq 0$, then $\Re{\lambda_{1,2}}<0$.
\end{proposition}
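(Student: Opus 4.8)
The plan is to read \eqref{disper_rel2}, for each fixed $\xi\in\mathbb{R}$, as a quadratic equation $\lambda^2+B\lambda+C=0$ with
\[
B=\mu\xi^2-2i\xi(s-U^\pm),\qquad C=\big(R^\pm h'(R^\pm)-(s-U^\pm)^2\big)\xi^2+\tfrac{k^2}{2}\xi^4-is\mu\xi^3,
\]
and to show that both roots lie in $\{\Re\lambda\le 0\}$, with strict inequality when $\xi\neq0$. The case $\xi=0$ is immediate: \eqref{disper_rel2} reduces to $\lambda^2=0$, so $\lambda=0$ is a double root on the imaginary axis, which already places that point of the curves in the closed left half-plane. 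From now on I take $\xi\neq0$ and aim at $\Re\lambda_{1,2}<0$.

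Next I would apply the quadratic formula, $\lambda_{1,2}=\tfrac12\big(-B\pm\sqrt{w}\big)$ with $w:=B^2-4C$. Writing $z$ for either square root of $w$ and expanding $z=a+bi$, $z^2=w$, one has the elementary identity $(\Re z)^2=\tfrac12\big(|w|+\Re w\big)$, valid for either branch. Since $\Re(-B)=-\mu\xi^2$, the root with the larger real part satisfies $\Re\lambda=\tfrac12\big(-\mu\xi^2+|\Re z|\big)$, so it suffices to prove $|\Re z|<\mu\xi^2$. As both sides are nonnegative, this is equivalent to $\tfrac12(|w|+\Re w)<\mu^2\xi^4$, i.e.\ to
\[
|w|<2\mu^2\xi^4-\Re w .
\]

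The remaining work is to compute $w$ and check this scalar inequality. A short computation — using $s-(s-U^\pm)=U^\pm$ to combine the two $\xi^3$ terms — gives
\[
w=(\mu^2-2k^2)\xi^4-4R^\pm h'(R^\pm)\,\xi^2+4i\mu U^\pm\xi^3 ,
\]
so $\Re w=(\mu^2-2k^2)\xi^4-4R^\pm h'(R^\pm)\xi^2$ and $\Im w=4\mu U^\pm\xi^3$. Then $\mu^2\xi^4-\Re w=2k^2\xi^4+4R^\pm h'(R^\pm)\xi^2>0$ for $\xi\neq0$ (recall $k>0$, $h'(R^\pm)>0$), hence $2\mu^2\xi^4-\Re w=\mu^2\xi^4+\big(2k^2\xi^4+4R^\pm h'(R^\pm)\xi^2\big)>0$ and the displayed inequality may be squared. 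After cancelling $(\Re w)^2$ it becomes $(\Im w)^2<4\mu^2\xi^4\big(\mu^2\xi^4-\Re w\big)$, and dividing by $8\mu^2\xi^6>0$ it reduces to
\[
2\big((U^\pm)^2-R^\pm h'(R^\pm)\big)<k^2\xi^2 .
\]
Since $R^\pm h'(R^\pm)=c_s(R^\pm)^2$ and the end states are subsonic or sonic, $(U^\pm)^2\le c_s(R^\pm)^2$, so the left-hand side is $\le 0<k^2\xi^2$; the inequality holds, giving $\Re\lambda_{1,2}<0$ for $\xi\neq0$ and hence the proposition.

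I do not expect a genuine obstacle here: the computation of $w$ is routine. The only delicate point is the square-root bookkeeping, which I would neutralize once and for all with the identity $(\Re z)^2=\tfrac12(|w|+\Re w)$; this is precisely what makes squaring the inequality legitimate and spares us from tracking which branch of $\sqrt{w}$ produces the dominant root. The conceptual content to emphasize is that, after clearing the square root, the dispersive ($k^2$) and sound-speed contributions enter with the correct signs while the $\xi^3$ pieces conspire to leave exactly $(U^\pm)^2-c_s(R^\pm)^2$, so that subsonicity (or sonicity) of the end states is exactly the hypothesis one needs. An alternative, more systematic route would be to invoke a Routh--Hurwitz / Hermite--Biehler criterion for quadratics with complex coefficients, but the direct argument above is shorter and more transparent.
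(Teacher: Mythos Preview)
Your proof is correct and follows essentially the same route as the paper: you write the quadratic formula, use the identity $(\Re\sqrt{w})^2=\tfrac12(|w|+\Re w)$, square the resulting inequality after checking the right-hand side is positive, and reduce to $2((U^\pm)^2-R^\pm h'(R^\pm))<k^2\xi^2$, which follows from the subsonic/sonic hypothesis. The only differences from the paper are notational (the paper writes $D=p+iq$ for your $w=\Re w+i\,\Im w$).
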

\begin{proof}
To simplify notation in this proof we are going to drop the superscript of $R$ and $U$.\\
The roots of the dispersion relation \eqref{disper_rel2} are (see Figure \ref{fig_essential_spectrum2})
\begin{equation*}
\lambda_{1,2}=\frac{-\mu \xi^2 + 2 i \xi(s-U) \pm \sqrt{D}}{2},
\end{equation*}
where the discriminant is
\begin{align*}
D&=(\mu \xi^2 - 2 i \xi (s-U))^2 - 4\Big{(}(R h'(R)-(s-U)^2)\xi^2+\frac{k^2}{2}\xi^4 - i s \mu \xi^3\Big{)}\\
&=p+iq,
\end{align*}
with
\begin{align*}
p&=- 4 R h'(R) \xi^2+(\mu^2 - 2 k^2)\xi^4,\\
q&=4 \mu U  \xi^3.
\end{align*}
\begin{figure}[H]
\begin{center}
\includegraphics[scale=0.8]{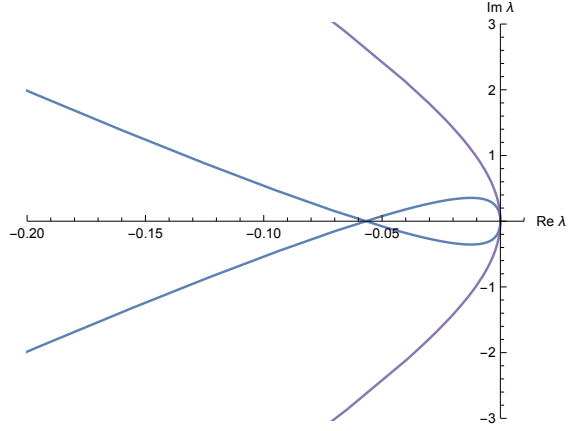}
\end{center}
\caption{The bound for the essential spectrum for parameters $k = \sqrt{2}$, $\mu = 0.1$, $s = 1$, $\gamma = 3/2$, $R = 0.5$, $U = -0.746$.}
\label{fig_essential_spectrum2}
\end{figure}
Clearly, if $\xi=0$, then the roots of \eqref{disper_rel2} are $\lambda_{1,2}=0$.

Suppose now $\xi \neq 0$. The condition 
\begin{equation}
\label{cond_stab1n}
 -\mu \xi^2 + |\Re \sqrt{D}| < 0 
 \end{equation}
 guarantees that $\lambda_{1,2}$ are in the left half-plane. Since $\mu \xi^2 > 0$, it is equivalent to
 \begin{equation}
 \label{cond_stab2n}
 (\Re \sqrt{D})^2 < \mu^2 \xi^4.
 \end{equation}
 By direct inspection we obtain
 \begin{equation*}
 \Re \sqrt{D} = \frac{\sqrt{2}}{2} \sqrt{\sqrt{p^2+q^2}+p}
 \end{equation*}
 and the condition \eqref{cond_stab2n} is equivalent to
 \begin{equation}
 \label{cond_stab3n}
 \sqrt{p^2+q^2} < 2 \mu^2 \xi^4 - p.
 \end{equation}
Since $h'(R)\geq 0$ implies in particular
 \begin{equation*}
 2 \mu^2 \xi^4 - p = 4 R h'(R) \xi^2 + (\mu^2+2 k^2) \xi^4 >0,
 \end{equation*}
  \eqref{cond_stab3n} is equivalent to
 \begin{equation*}
 p^2+q^2<(2 \mu^2 \xi^4 - p)^2,
 \end{equation*}
 that is
 \begin{equation*}
 B : = q^2 + 4 \mu^2 p \xi^4 -4 \mu^4 \xi^8 <0.
 \end{equation*}
Since the end states are subsonic or sonic, after squaring the corresponding inequality $|U| \leq c_s(R)$, we end up to 
 $U^2 - R h'(R) \leq 0$. Finally,
  \begin{equation*}
 \frac{B}{8 \mu^2} = 2 (U^2 - R h'(R)) \xi^6 - k^2 \xi^8<0
 \end{equation*}
and  \eqref{cond_stab1n} holds, concluding the proof.
\end{proof}

In the next proposition we examine the behavior of roots to \eqref{char_equation} to conclude in particular  \emph{consistent splitting}:  on the right of the curves $\lambda(\xi)$ solving \eqref{disper_rel2},
 i.e.\ the  values of $\lambda$ such that real part of roots of \eqref{char_equation} is zero, we have two roots of \eqref{char_equation} with positive real part and two roots with negative real part.
\begin{proposition}\label{prop:2}
If $\mu^2 \neq 2 k^2$,  on the right of the curves $\lambda(\xi)$ solving \eqref{disper_rel2}, equation \eqref{char_equation} has two solutions with  positive real part and  two solutions with negative real part.
\end{proposition}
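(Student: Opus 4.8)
The plan is to combine Proposition~\ref{prop:1} with the elementary fact that the number of roots of the quartic \eqref{char_equation} with positive real part changes only when a root crosses the imaginary axis, and then to pin this number down by an explicit computation for one convenient value of $\lambda$.

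First I would record that, for each $\lambda$, \eqref{char_equation} is a monic degree-four polynomial in $\nu$ with coefficients polynomial in $\lambda$; hence its four roots (counted with multiplicity) depend continuously on $\lambda$ and stay bounded on compact $\lambda$-sets. A root of \eqref{char_equation} can cross the imaginary axis only at values of $\lambda$ for which \eqref{char_equation} has a root $\nu = i\xi$, $\xi \in \mathbb{R}$, i.e.\ precisely on the curves solving \eqref{disper_rel2}. Therefore on every connected component of the complement of these curves the number of roots of \eqref{char_equation} with positive (resp.\ negative) real part is constant, and in particular none of these roots is purely imaginary there. By Proposition~\ref{prop:1} all of these curves lie in $\{\Re\lambda\le0\}$ and meet the imaginary axis only at $\lambda=0$; hence the open right half-plane is contained in a single component of the complement --- the unbounded one, which is what we call the region ``to the right of the curves'' --- and on it the splitting is the same as for $\lambda$ real and large.

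Next I would compute that splitting asymptotically. Setting $\nu = \sqrt{\lambda}\,w$ in \eqref{char_equation}, dividing by $\lambda^2$, and letting $\lambda\to+\infty$, the limiting equation for $w$ is $\frac{k^2}{2}w^4-\mu w^2+1=0$, that is
\begin{equation*}
w^2=\frac{\mu\pm\sqrt{\mu^2-2k^2}}{k^2}.
\end{equation*}
The hypothesis $\mu^2\neq2k^2$ makes the two values of $w^2$ distinct, so the four limiting roots $w$ are simple. If $\mu^2>2k^2$ both values of $w^2$ are real and strictly positive (recall $\mu,k>0$), hence the four $w$'s are real and nonzero --- two positive, two negative. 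If $\mu^2<2k^2$ the two values of $w^2$ are complex conjugate, of modulus $\sqrt2/k$ and real part $\mu/k^2>0$, hence strictly in the right half-plane; so for each, one square root lies in the open right half-plane and one in the open left half-plane, and again two of the $w$'s have positive real part and two have negative. Since for $\lambda$ large the genuine roots of \eqref{char_equation} equal $\sqrt\lambda$ times small perturbations of these four simple values, and $\sqrt\lambda>0$, \eqref{char_equation} has exactly two roots with positive real part and two with negative real part for $\lambda$ large and positive. By the previous step this persists throughout the region to the right of the curves; the argument is the same for $M^{+}$ and $M^{-}$.

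The main obstacle is making the asymptotic root count rigorous: one must justify that after the rescaling $\nu=\sqrt\lambda\,w$ the actual roots of \eqref{char_equation} converge, as $\lambda\to+\infty$, to the roots of the limiting quartic, and that their real parts inherit the corresponding signs --- this is exactly where simplicity of the limiting roots, i.e.\ $\mu^2\neq2k^2$, is used, via a routine continuity (or Rouch\'e) argument applied to $w^4+O(\lambda^{-1/2})=0$. Everything else is bookkeeping: checking the leading-order balance, and noting that Proposition~\ref{prop:1} already rules out purely imaginary roots of \eqref{char_equation} strictly to the right of the curves, so the splitting is well defined there.
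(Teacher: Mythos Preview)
Your proof is correct and takes a genuinely different route from the paper's. The paper fixes $\lambda\in\mathbb{R}$ large and works directly with the quartic \eqref{char_equation}: it computes the discriminant of the quartic and of the associated depressed quartic to decide whether the roots are real or come in complex conjugate pairs, then in the case $\mu^2>2k^2$ applies Descartes' rule of signs, and in the case $\mu^2<2k^2$ argues that since the $\nu^2$ coefficient is negative the roots cannot all lie in one half-plane. Your rescaling $\nu=\sqrt{\lambda}\,w$ is cleaner: it produces a single limiting biquadratic whose roots you locate explicitly, with a uniform treatment of both cases, and it is exactly the rescaling the paper itself uses later in Lemma~3.1 for the large-$|\lambda|$ Evans function analysis. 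You also make explicit the continuity argument (constant root signature on connected components of the complement of the curves), which the paper leaves implicit.

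One small remark: invoking Proposition~\ref{prop:1} imports the subsonic/sonic hypothesis, which is not part of the statement of Proposition~\ref{prop:2}. You do not actually need it---the region ``to the right of the curves'' is by definition the unbounded connected component of the complement of $\{\lambda(\xi):\xi\in\mathbb{R}\}$, and it contains all sufficiently large real $\lambda$ regardless of whether the end states are subsonic. Dropping that reference makes your argument self-contained.
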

\begin{proof}
Again, in order to simplify notation, we shall drop the superscript of $U$.

Let  $\lambda \in \mathbb{R}$, $\lambda \gg 1$. The Discriminant of \eqref{char_equation} is
\begin{equation*}
\Delta = \frac{512}{k^6}\Big{(} \frac{\mu^2}{k^2}-2\Big{)}^2\lambda^6+\mathcal{O}(\lambda^5).
\end{equation*}
Consider the depressed quartic equation, associated to \eqref{char_equation}:
\begin{equation*}
z^4 + c_2 z^2 + c_1z + c_0 = 0,
\end{equation*}
which is obtained from \eqref{char_equation} by the change of variable $\nu = z - a_3/4$, where $a_3$ is the third order coefficient of \eqref{char_equation}.
Since $\mu^2 \neq 2 k^2$, we have $\Delta > 0$. Moreover,
\begin{equation*}
c_2 = -\frac{2 \mu}{k^2} \lambda + \mathcal{O}(1),
\end{equation*}
and therefore $c_2<0$.
With the notation 
\begin{equation*}
D=64 \Big{(}c_0-\frac{c_2^2}{4}\Big{)},
\end{equation*}
the following holds:
\begin{itemize}
\item[(i)] if $D<0$, since $c_2<0$, then the roots of \eqref{char_equation} are real and distinct;\\
\item[(ii)] if $D>0$, then \eqref{char_equation} has two pairs of (non-real) complex conjugated roots. 
\end{itemize}
Since 
\begin{equation*}
D =\frac{64(2 k^2 - \mu^2)}{k^4}\lambda^2 + \mathcal{O}(\lambda),
\end{equation*}
then we are in case (i) (resp.\ case (ii)) for $\mu^2>2 k^2$ (resp.\ $\mu^2<2 k^2$).

Assume $\mu^2>2 k^2$. 
We will apply the Descartes' rule of signs to determine the signs of the four real roots of \eqref{char_equation}.  
Since $\lambda \gg 1$, then, disregarding the sign of the term $U-s$, the number of sign changes between consecutive coefficients is two, hence there are at most 2 positive roots. If we substitute $\nu$  with $-\nu$, then there are again two sign changes, so there are at most two negative roots. Since all roots are real, we can conclude that in that case the characteristic equation has two positive and two negative roots.

Now, consider the case $\mu^2<2 k^2$.
On the right of $\lambda(\xi)$, \eqref{char_equation} does not have a purely imaginary root.
Moreover, the second order coefficient of that equation is negative in the regime $\lambda \gg 1$ and the leading coefficient is equal to $1$. Hence, the roots of the equation can not be all in the left half-plane, as in this case the coefficient should be all positive.
Indeed, in that case the equation can be written as a product of linear factors $\nu-\nu_0$, with $\nu_0<0$, which correspond to real roots, and quadratic factors $(\nu-a)^2 + b^2$, which correspond to complex conjugated roots $a\pm i b$, with $a<0$. Each of these factors has positive coefficients, hence the equation  has only positive coefficients.
Moreover, with the substitution  $\nu \rightarrow -\nu$, the second order coefficient is still negative, hence there are roots also in the left half-plane. In conclusion,  there are two complex conjugate roots in the left half-plane and two complex conjugate roots in the right half-plane and the proof is complete.
%
\end{proof}
\section{Analysis of point spectrum}\label{point_spectrum}
For the analysis of the  point spectrum of our linearized operator around the profile, namely to locate its eigenvalues solving the problem  \eqref{eq_variable_coeff}, we shall use  the Evans function, as eigenvalues are zeros of the latter. To this end, we need to be in the situation of  consistent splitting and therefore in the sequel we shall assume 
$\mu^2\neq 2k^2$; see Proposition \ref{prop:2}.
\subsection{System in integrated variables}\label{subsec:intvar}
In order to remove the zero eigenvalue, which is always present, being the corresponding eigenfunction given by the derivative of the profile, without further modifications of the spectrum  \cite{Humpherys},  we re--express the above linearized systems in terms of integrated variables. To this end, we consider the integrated variables
\begin{equation*}
\hat{\rho}(x)=\int_{-\infty}^x\rho(y)dy,\mbox{ }\hat{u}(x)=\int_{-\infty}^xu(y)dy
\end{equation*}
and we  rewrite the eigenvalue equation \eqref{eq_variable_coeff} as a first order system as follows:
\begin{equation}
\label{first_order_system_variable_c}
V' = M(x,\lambda)V,
\end{equation}
for
\begin{equation*}
M(x,\lambda)= \begin{bmatrix}
0 & 0 & 1 & 0\\
-\frac{\lambda+U'(x)}{R(x)} & -\frac{R'(x)}{R(x)} & \frac{s-U(x)}{R(x)} & 0\\
0 & 0 & 0 & 1\\
m_{4,1} &m_{4,2}  &m_{4,3}  &m_{4,4}
\end{bmatrix},
\end{equation*}
where, as before,  $V=[\rho,u,u_1,u_2]^T$, $\rho' = u_1$ and $u_1' = u_2$, and
\begin{align*}
m_{4,1} &= \frac{2}{k^2}\bigg{(} (f_2+g_2)(U' + \lambda) - R(f_1'+g_3+f_3) + \mu \Big{(} U'' - \frac{2 R' U'}{R} - \frac{2 R' \lambda}{R} \Big{)}\bigg{)},\\
m_{4,2} &= \frac{2}{k^2}\bigg{(} R(\lambda - f_2' - g_1) + R'(f_2+g_2) + \mu \Big{(} R'' - \frac{2 (R')^2}{R} \Big{)} \bigg{)},\\
m_{4,3} &= \frac{2}{k^2}\bigg{(} -R(f_1+f_4+g_4) - f_2(f_2+g_2) +\mu \Big{(}\frac{2 f_2 R'}{R} - f_2' + U' + \lambda \Big{)} \bigg{)},\\
m_{4,4} &= -\frac{2}{k^2}\bigg{(} R(f_5+g_5) + \mu f_2 \bigg{)},
\end{align*}
with the following notations: 
\begin{align*}
f_1(x) &= -\frac{dh}{dR}(R(x)), \nonumber\\
f_2(x) &= s - U(x), \nonumber\\
f_3(x) &= \frac{k^2}{2}\big{(}(R(x)^{-\frac{1}{2}}(R(x)^{-\frac{1}{2}})'')' - (R(x)^{-\frac{3}{2}}(R(x)^\frac{1}{2})'')' \big{)}, \nonumber\\
f_4(x) &= \frac{k^2}{2}\big{(}R(x)^{-\frac{1}{2}}(R(x)^{-\frac{1}{2}})'' + 2(R(x)^{-\frac{1}{2}}(R(x)^{-\frac{1}{2}})')' - R(x)^{-\frac{3}{2}}(R(x)^\frac{1}{2})''\big{)}, \nonumber\\
f_5(x) &= -k^2\frac{R'(x)}{R(x)^2}, \nonumber\\
g_1(x) &= \mu \Big{(} \frac{R(x)'}{R(x)} \Big{)}', \nonumber\\
g_2(x) &= \mu \frac{R'(x)}{R(x)}, \nonumber\\
g_3(x) &= \mu \Big{(} \Big{(}\frac{U'(x)}{R(x)}\Big{)}' - \Big{(}\frac{(R(x)U(x))'}{R(x)^2}\Big{)}' \Big{)}, \nonumber\\
g_4(x) &= \mu \Big{(} \frac{U'(x)}{R(x)} + \Big{(}\frac{U(x)}{R(x)}\Big{)}' - \frac{(R(x)U(x))'}{R(x)^2} \Big{)}, \nonumber\\
g_5(x) &= \mu \frac{U(x)}{R(x)}.
\end{align*}
Since the profile $[R(x), U(x)]$ converges as $x \rightarrow \pm \infty$, the system \eqref{first_order_system_variable_c} has exponential dichotomies on  $\mathbb{R}_0^+$ and $\mathbb{R}_0^-$.
Let $S^{\pm}$ be the subspaces of initial conditions that decay exponentially as $x \rightarrow \pm \infty$. Since any eigenfunction $V(x)$ is bounded and solves \eqref{first_order_system_variable_c}, $V(0)$ lies in $S^+$ and in $S^-$ and therefore, $\rho(x)$ and $u(x)$ decay exponentially as $|x|\rightarrow +\infty$.

For $\lambda \neq 0$, integrating \eqref{eq_variable_coeff} yields
\begin{equation*}
\int \rho dx = 0, \mbox{ } \int u dx = 0.
\end{equation*}
In addition, we will show that $\hat{\rho}(x)$ and $\hat{u}(x)$ decay exponentially as $|x| \rightarrow +\infty$. Let us consider the case for $x \rightarrow +\infty$; the other cases being similar.
Since, in particular, we have $|\rho(x)| \leq C_1 \exp(-C_2 x)$, we get
\begin{align*}
|\hat{\rho}(x)|&= \left \lvert \int_{x}^\infty \rho(y)dy \right \rvert  \leq \int_{x}^\infty |\rho(y)|dy \leq C_1 \int_{x}^\infty \exp(-C_2 y) dy \\ &=\frac{C_1}{C_2}\exp(-C_2 x).
\end{align*}
Hence, $\hat{\rho}(x)$ decays exponentially as $x \rightarrow +\infty$.

Expressing $\rho$ and $u$ in terms of $\hat{\rho}$ and $\hat{u}$ and integrating \eqref{eq_variable_coeff} from $-\infty$ to $x$ we get the system in integrated variables:
\begin{align}
\lambda \hat{\rho}&=f_2\hat{\rho}'-R\hat{u}', \label{sys_int_vars1}\\
\lambda \hat{u}&= f_1 \hat{\rho}'+f_2 \hat{u}' + \mu \big{(}R^{-1}(R \hat{u}'+U\hat{\rho}')'-R^{-2}(RU)'\hat{\rho}'\big{)}\nonumber\\
&+\frac{k^2}{2}\big{(}R^{-\frac{1}{2}}(R^{-\frac{1}{2}}\hat{\rho}')''-R^{-\frac{3}{2}}(R^\frac{1}{2})''\hat{\rho}'\big{)}, \label{sys_int_vars2}
\end{align}
where
\begin{align*}
f_1(x)&= -\frac{dh}{dR}(R(x)) \nonumber,\\
f_2(x)&= s-U(x).
\end{align*}
Correspondingly, the system \eqref{sys_int_vars1}-\eqref{sys_int_vars2} can be rewritten as
\begin{equation*}
\hat{V}'=\hat{M}(x,\lambda)\hat{V},
\end{equation*}
where $\hat{V}=[\hat{\rho},\hat{u},\hat{u}_1,\hat{u}_2]^T$,  with $\hat{\rho}' = \hat{u}_1$ and $\hat{u}_1' = \hat{u}_2$, and
\begin{equation}
\label{mat_M1_c}
\hat{M}(x,\lambda)= \begin{bmatrix}
0 & 0 & 1 & 0\\
-\frac{\lambda}{R} & 0 & \frac{f_2}{R} & 0\\
0 & 0 & 0 & 1\\
\hat{m}_{4,1} & \hat{m}_{4,2} & \hat{m}_{4,3}& \hat{m}_{4,4}
\end{bmatrix},
\end{equation}
with
\begin{align*}
\hat{m}_{4,1} &= \frac{2 \lambda f_2}{k^2},\nonumber\\
\hat{m}_{4,2} &= \frac{2 \lambda R}{k^2},\nonumber\\
\hat{m}_{4,3} &= \frac{2}{k^2}\bigg{(}-R f_1 -f_2^2+\mu\Big{(}\frac{(RU)'}{R} + \lambda\Big{)}\bigg{)} + \frac{R''}{R}-\frac{(R')^2}{R^2},\nonumber\\
\hat{m}_{4,4} &= -\frac{2 \mu s}{k^2} + \frac{R'}{R}.
\end{align*}
\subsection{The Evans function for large $|\lambda|$}
To define  the Evans function, let us consider the equation $Y'=\hat{M}(y,\lambda)Y$, where $\hat{M}(y,\lambda)$ is defined in \eqref{mat_M1_c}.
As it is manifest, its limits at $\pm\infty$ are given by  the matrices $M^{\pm}$, defined by \eqref{mat_Mn}, corresponding to limit states $P^{\pm}$, and we assume  these matrices are hyperbolic. This is always true if we are to the right of the bound for the essential spectrum; see Proposition \ref{prop:2}. In addition, we assume that $M^{-}$ has $k$ unstable eigenvalues $\nu^{-}_1,\dots,\nu^{-}_k$ (i.e.\ $\Re(\nu^{-}_i)>0$), and $M^{+}$ has $n-k$ stable eigenvalues $\nu^{+}_1,...,\nu^{+}_{n-k}$ (i.e.\ $\Re(\nu^{+}_i)<0$), and denote the corresponding (normalized) eigenvectors by $v^{\pm}_i$. In our case $n=4$ and $k=2$. Let $Y^{-}_i$ be a solution of $Y'=M(y,\lambda)Y$, satisfying $\exp(\nu^{-}_i y)Y^{-}(y)$ tends to $v^{-}_i$ as $y \rightarrow -\infty$ and $\exp(\nu^{+}_i y)Y^{+}(y)$ tends to $v^{+}_i$ as $y \rightarrow +\infty$. Then,  the Evans function can be defined by
\begin{equation*}
E(\lambda) = \det(Y^-_1(0), .., Y^-_k(0),  Y^+_1(0), ... ,Y^+_{n-k}(0)).
\end{equation*}
As a consequence, $\lambda$ is in the point spectrum of $L$ if and only if $E(\lambda)=0$.

Now, to analyze the behavior of the Evans function for large $\lambda$, let us start by recalling the eigenvalue problem \eqref{eq_variable_coeff}:
\begin{align}
\lambda \rho &= s \rho' - (R u + U \rho)' \label{eigenvalue_c_1},\\
\lambda u &= (f_1 \rho)' + (f_2 u)' \nonumber\\
&+ \mu u'' + g_1 u + g_2 u' + g_3\rho + g_4 \rho' + g_5 \rho'' \nonumber\\
&+ f_3\rho + f_4 \rho' + f_5 \rho'' + \frac{k^2}{2 R} \rho''' \label{eigenvalue_c_2},
\end{align}
where $f_i$ and $g_i$, $i=1,\dots,5$ are defined above. 
Integrating equation \eqref{eigenvalue_c_1} from $-\infty$ to $x$ and expressing $\rho$ in terms of the integrated variable $\hat{\rho}$, we get:
\begin{equation}
\label{expr_u}
\lambda \hat{\rho} = (s - U) \hat{\rho}' - R u.
\end{equation}
We solve the above equation  for $u$ and substitute in \eqref{eigenvalue_c_2} to end up to the following scalar equation:
\begin{align}
&\hat{\rho}^{(4)}+2 \Big {(}\frac{s \mu}{k^2} - \frac{R'}{R} \Big{)} \hat{\rho}''' \nonumber\\
&+\frac{2}{k^2}\big{(} R(f_1+f_4+2\mu f_6'+g_4) + (s-U)(f_2+g_2) - \lambda \mu \big{)} \hat{\rho}'' \nonumber\\
&+\frac{2}{k^2} \Big{(} R(f_1' + f_3 + (f_2 + g_2)f_6'+\mu f_6''+g_3)
+(s-U)(f_2'+g_1) \nonumber\\
&+\lambda\Big{(}2(U-s)+\mu \frac{R'}{R}\Big{)}\Big{)}\hat{\rho}' \nonumber\\
&+\frac{2}{k^2} \Big{(} \lambda \Big{(} -f_2' - g_1 + (f_2 + g_2)\frac{R'}{R} - \mu R (R^{-1})''\Big{)} + \lambda^2 \Big{)} \hat{\rho} = 0, \label{scalar_eq_c}
\end{align}
where
\begin{align*}
f_6(x)=\frac{s-U(x)}{R(x)}.
\end{align*}
\begin{lemma}
$\lambda \neq 0$ is an eigenvalue for \eqref{eigenvalue_c_1}-\eqref{eigenvalue_c_2} if and only if it is an eigenvalue for \eqref{scalar_eq_c}. The Evans function for \eqref{scalar_eq_c} does not vanish for $\Re(\lambda) \geq 0$ and $|\lambda|$ large enough.
\end{lemma}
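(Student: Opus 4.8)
The plan is to treat the two assertions of the lemma separately. The equivalence of nonzero eigenvalues is essentially bookkeeping: starting from \eqref{eigenvalue_c_1}--\eqref{eigenvalue_c_2}, the substitution leading to \eqref{expr_u} is reversible for $\lambda\neq0$ (it only uses the continuity equation and the definition of $\hat\rho$), so an eigenfunction $(\rho,u)$ of the system with $\rho,u$ decaying at $\pm\infty$ corresponds bijectively to a decaying solution $\hat\rho$ of the scalar equation \eqref{scalar_eq_c}, with $u$ recovered algebraically from \eqref{expr_u} and $\rho=\hat\rho'$. One must check that the decay/integrability discussion of Section \ref{subsec:intvar} — namely that $\rho,u$ decay exponentially iff $\hat\rho,\hat u$ do, using the exponential dichotomies of \eqref{first_order_system_variable_c} — transfers verbatim, so that being an eigenfunction in one formulation is equivalent to being one in the other. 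I would state this carefully but not belabor it.

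The substantive part is the non-vanishing of the Evans function for $\Re\lambda\geq0$, $|\lambda|$ large. First I would write \eqref{scalar_eq_c} as a first-order system $\hat V'=\hat{\mathcal M}(y,\lambda)\hat V$ in the companion variables $(\hat\rho,\hat\rho',\hat\rho'',\hat\rho''')$ and rescale. The natural scaling is suggested by the limit matrix \eqref{mat_Mn}: the characteristic roots $\nu$ of $M^\pm$ behave, for $|\lambda|$ large, like the roots of $\nu^4 - (2\mu/k^2)\lambda\,\nu^2 + (2/k^2)\lambda^2\approx0$ after we track the dominant balances — i.e.\ there is a "slow" pair $\nu\sim\pm\sqrt{2/\mu}\,\sqrt\lambda\cdot$(something) and a "fast" behaviour governed by the top-order term. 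Concretely I would set $\nu=\sqrt{|\lambda|}\,\zeta$ (or treat separately the $O(\sqrt\lambda)$ roots and the $O(1)$/$O(\lambda)$ roots), so that to leading order the four roots split into two with $\Re>0$ and two with $\Re<0$, uniformly for $\arg\lambda\in[-\pi/2,\pi/2]$; this is exactly the consistent-splitting picture of Proposition \ref{prop:2}, now made quantitative in $\lambda$. The key point is that the stable and unstable subspaces of $M^\pm$ converge, after rescaling, to fixed transversal subspaces as $|\lambda|\to\infty$.

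Next I would use a tracking/gap-lemma-type argument: rescaling the independent variable by $y=\tau/\sqrt{|\lambda|}$ turns $\hat{\mathcal M}$ into $\sqrt{|\lambda|}$ times a matrix that converges (pointwise in $\tau$, and with the variable-coefficient part becoming a relatively bounded perturbation that is integrable against the exponential trichotomy) to its constant limit; the profile derivatives $R',U',\dots$ contribute lower-order terms in $1/\sqrt{|\lambda|}$. Standard perturbation theory for exponential dichotomies (as in \cite{Humpherys}) then shows that the decaying solution bases $Y^-_1,Y^-_2$ at $0$ converge, after normalization, to the span of the two unstable eigenvectors of the limit constant-coefficient problem, and similarly $Y^+_1,Y^+_2$ converge to the span of the two stable eigenvectors. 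Since those four limiting vectors are linearly independent — precisely the transversality established in the rescaled consistent-splitting computation above — the determinant $E(\lambda)$ converges to a nonzero constant, hence $E(\lambda)\neq0$ for $|\lambda|$ large with $\Re\lambda\geq0$.

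The main obstacle I anticipate is getting the rescaling right so that the limit is genuinely nondegenerate: because the limit matrix \eqref{mat_Mn} has a mixed-order structure in $\lambda$ (entries of size $\lambda$, $\sqrt\lambda$, and $1$ after the natural scaling), one must identify the correct block decomposition — a fast $2\times2$ block and a slow $2\times2$ block, or a single homogeneous $\sqrt\lambda$-scaling that regularizes all four roots — and verify that the variable-coefficient corrections coming from $f_i,g_i$ and the profile derivatives are subordinate to it uniformly on the sector $\{\Re\lambda\geq0\}$, including near the boundary $\Re\lambda=0$ where the dichotomy gap of the limit problem could a priori close. Once the uniform spectral gap of the rescaled limit operator on the closed right half-plane is in hand (which reduces to checking that the rescaled characteristic polynomial has no purely imaginary root for $|\lambda|$ large — a consequence of Proposition \ref{prop:1} and Proposition \ref{prop:2}), the rest is routine dichotomy perturbation.
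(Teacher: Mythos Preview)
Your approach is essentially the paper's: reduce to the scalar equation via \eqref{expr_u}, rescale by $\sqrt{|\lambda|}$, obtain a constant-coefficient limit, and conclude by dichotomy perturbation. The anticipated obstacle you flag does not in fact arise: a single homogeneous scaling $x=y/|\lambda|^{1/2}$ works for all four roots---after dividing \eqref{scalar_eq_c} by $|\lambda|^2$ one obtains, in the limit $|\lambda|\to\infty$, the clean constant-coefficient equation
\[
\frac{d^4\hat\rho}{dy^4}-\frac{2\mu}{k^2}\tilde\lambda\,\frac{d^2\hat\rho}{dy^2}+\frac{2}{k^2}\tilde\lambda^2\hat\rho=0,\qquad \tilde\lambda=\lambda/|\lambda|,
\]
whose four characteristic roots are all $O(1)$ and, under the standing assumption $\mu^2\neq 2k^2$, distinct with two in each half-plane for $\Re\tilde\lambda\geq0$; no fast/slow block decomposition is needed. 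The paper verifies this last spectral claim by direct case analysis (viscosity-dominant $\mu^2>2k^2$ versus dispersion-dominant $\mu^2<2k^2$) of the biquadratic $z^4-(2\mu/k^2)\tilde\lambda z^2+(2/k^2)\tilde\lambda^2=0$, rather than by appealing to Propositions~\ref{prop:1}--\ref{prop:2} as you suggest; this is cleaner since those propositions concern the full characteristic equation, not its rescaled limit. The Evans function of the limit problem is then an explicit nonzero Vandermonde determinant, and the perturbation step is exactly as you outline.
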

\begin{proof}
As said before, for $\lambda \neq 0$ we integrate  \eqref{eigenvalue_c_1}-\eqref{eigenvalue_c_2} to obtain
\begin{equation*}
\int \rho dx = 0, \mbox{ } \int u dx = 0.
\end{equation*}
We are thus allowed to use  the integrated variable
\begin{equation*}
\hat{\rho}(x) = \int_{-\infty}^x \rho(y) dy,
\end{equation*}
which decays exponentially as $|x| \rightarrow + \infty$. Thus, from \eqref{expr_u} we get
\begin{equation*}
u = -\frac{\lambda}{R}\hat{\rho} + \frac{s-U}{R}\hat{\rho}'
\end{equation*}
and hence \eqref{scalar_eq_c}.
In particular, if $\lambda \neq 0$ is not an eigenvalue of \eqref{scalar_eq_c}, it is also not an eigenvalue of \eqref{eigenvalue_c_1}-\eqref{eigenvalue_c_2}.

Now, we make a change of variable
\begin{equation*}
x=\frac{y}{|\lambda|^\frac{1}{2}}
\end{equation*}
and, dividing \eqref{scalar_eq_c} by $|\lambda|^2$ yields
\begin{align}
&\frac{d^4\hat{\rho}}{dy^4}+\frac{2}{|\lambda|^\frac{1}{2}} \Big {(}\frac{s \mu}{k^2} - \frac{R'}{R} \Big{)} \frac{d^3\hat{\rho}}{dy^3} \nonumber\\
&+\frac{2}{k^2}\Big{(} \frac{R(f_1+f_4+2\mu f_6'+g_4) + (s-U)(f_2+g_2)}{|\lambda|} - \frac{\lambda}{|\lambda|} \mu \Big{)} \frac{d^2 \hat{\rho}}{dy^2} \nonumber\\
&+\frac{2}{k^2 |\lambda|^\frac{3}{2}} \Big{(} R(f_1' + f_3 + (f_2 + g_2)f_6'+\mu f_6''+g_3)
+(s-U)(f_2'+g_1) \nonumber\\
&+\lambda\Big{(}2(U-s)+\mu \frac{R'}{R}\Big{)}\Big{)}\frac{d\hat{\rho}}{dy} \nonumber\\
&+\frac{2}{k^2} \Big{(} \frac{\lambda}{|\lambda|^2} \Big{(} -f_2' - g_1 + (f_2 + g_2)\frac{R'}{R} - \mu R (R^{-1})''\Big{)} + \frac{\lambda^2}{|\lambda|^2} \Big{)} \hat{\rho} = 0. \label{rescaled_equation_c}
\end{align}
 Taking the limit $|\lambda|\rightarrow + \infty$ in \eqref{rescaled_equation_c} we end up with
\begin{equation}
\label{constant_coeff_equation_c}
\frac{d^4 \hat{\rho}}{dy^4} - \frac{2 \mu}{k^2}\tilde{\lambda} \frac{d^2 \hat{\rho}}{dy^2} +\frac{2}{k^2}\tilde{\lambda}^2 \hat{\rho}=0,
\end{equation}
where $\tilde{\lambda}={\lambda}/{|\lambda|}$.
The equation \eqref{constant_coeff_equation_c} can be rewritten in a standard way as a first-order system as follows:
\begin{equation}
\label{first_order_constant_coefficient}
\frac{d}{dy}\begin{bmatrix}
\rho_1\\ \rho_2\\ \rho_3 \\ \rho_4
\end{bmatrix}
 = \begin{bmatrix}
0 & 1 & 0 & 0\\
0 & 0 & 1 & 0\\
0 & 0 & 0 & 1\\
-\frac{2}{k^2}\tilde{\lambda}^2 & 0 & \frac{2 \mu}{k^2} \tilde{\lambda} & 0
\end{bmatrix}
\begin{bmatrix}
\rho_1\\ \rho_2\\ \rho_3 \\ \rho_4
\end{bmatrix},
\end{equation}
with associated  characteristic equation given by
\begin{equation}
\label{char_eq_1}
z^4 - \frac{2 \mu}{k^2}\tilde{\lambda} z^2 + \frac{2 \tilde{\lambda}^2}{k^2}=0.
\end{equation}
Let $\Re(\lambda) \geq 0$. We claim that, under the condition $\mu^2\neq 2 k^2$, 
\eqref{char_eq_1} has four distinct roots, with $\Re(z_1),\Re(z_2)<0$ and $\Re(z_3),\Re(z_4)>0$.
Indeed, since in addition $\tilde{\lambda}\neq 0$, then 
\begin{equation*}
D = \frac{4}{k^2}\Big{(} \frac{\mu^2}{k^2}-2\Big{)}\tilde{\lambda}^2 \neq 0.
\end{equation*}
%
%
Then, we make the change of variable $w=z^2$ to rewrite \eqref{char_eq_1} as follows:
\begin{equation}
\label{quadratic_char_eq}
w^2 - \frac{2 \mu}{k^2}\tilde{\lambda} w + \frac{2 \tilde{\lambda}^2}{k^2}=0.
\end{equation}
If $\Re(\lambda) \geq 0$, then $\Re(\tilde{\lambda}) \geq 0$ and, since $D \neq 0$, and in particular $\tilde{\lambda}\neq 0$, 
the equation \eqref{quadratic_char_eq} has two distinct nonzero roots $w_{1,2}$. Hence, the four roots $z_i$, $i=1,\dots 4$, of \eqref{char_eq_1} are distinct as well.

\textit{Case 1: $\mu^2>2k^2$ --- viscosity dominant case.}
In that case, $w_{1,2}$ are given by:
\begin{equation}
\label{eq:diffdom}
w_{1,2} = \left (\frac{\mu}{k^2} \pm \frac{1}{k}\sqrt{\frac{\mu^2}{k^2}-2}\right )\tilde{\lambda} = w_\pm \tilde{\lambda}
\end{equation}
for $w_+>\mu /k^2>w_->0$ real, positive numbers. Hence, $w_{1,2}$ are not real negative, so that the four solutions of $z^2 = w_j$, $j=1,2$, are not purely imaginary and, more precisely,  $z^2 = w_j$ has one solution with positive real part and one with negative real part, for $j=1,2$. 

\textit{Case 2: $\mu^2<2k^2$ --- dispersion dominant case.}
In that case, $w_{1,2}$  are given by:
\begin{equation}
\label{eq:dispdom}
w_{1,2} = \left (\frac{\mu}{k^2} \pm \frac{i}{k}\sqrt{2-\frac{\mu^2}{k^2}}\right )\tilde{\lambda}.
\end{equation}
and $|w_j| = \sqrt{2}/k$, $j=1,2$. Let $\tilde\lambda = \exp(i \theta)$, and $\theta \in [0, \pi/2] \cup [3 \pi/2, 2 \pi[$. We have
\begin{equation*}
\mathop{Arg}\left (\frac{\mu}{k^2} + \frac{i}{k}\sqrt{2-\frac{\mu^2}{k^2}}\right ) \in ]0, \pi/2[.
\end{equation*}
If $\theta \in [0, \pi/2]$, then $\mathop{Arg}(w_1) \in ]0, \pi[$; if $\theta \in [3 \pi/2, 2 \pi[$, then $\Re(w_1) > 0$. In both cases,  $w_1$ is not real negative. Analogously, 
\begin{equation*}
\mathop{Arg}\left (\frac{\mu}{k^2} - \frac{i}{k}\sqrt{2-\frac{\mu^2}{k^2}}\right ) \in ]3 \pi/2, 2 \pi[.
\end{equation*} 
If $\theta \in [0, \pi/2]$, then $\Re(w_2) > 0$; if $\theta \in [3 \pi/2, 2 \pi[$, then $\mathop{Arg}(w_1) \in ]\pi, 2 \pi[$. Again, in both cases  $w_{2}$ is not real negative and we conclude as before.

The equation \eqref{constant_coeff_equation_c} has constant coefficients, so its Evans function can be computed explicitly as follows.
Let $z_i$  be a simple eigenvalue of the matrix
\begin{equation}
\label{matrix_first_order_constant_coefficient}
 \begin{bmatrix}
0 & 1 & 0 & 0\\
0 & 0 & 1 & 0\\
0 & 0 & 0 & 1\\
-\frac{2}{k^2}\tilde{\lambda}^2 & 0 & \frac{2 \mu}{k^2} \tilde{\lambda} & 0
\end{bmatrix}
\end{equation}
with associated eigenvector $v_i = [1,z_i,z_i^2,z_i^3]^T$. Then, the Evans function 
is given by
\begin{align*}
\tilde{E}(\lambda) &= \det([v_1, v_2,v_3,v_4]) = \prod_{j<k}(z_j-z_k) \\
&=(z_1-z_2)(z_1-z_3)(z_1-z_4)(z_2-z_3)(z_2-z_4)(z_3-z_4) \\
&\neq 0,
\end{align*}
because the eigenvalues $z_i$ are distinct.

Now, as already done for \eqref{constant_coeff_equation_c}, we rewrite \eqref{rescaled_equation_c} in a standard way  as a first order system as follows:
\begin{equation}
\label{rescaled_first_order_sys}
W'=A(y,\lambda)W.
\end{equation}
The matrix \eqref{matrix_first_order_constant_coefficient} is hyperbolic, so the system \eqref{first_order_constant_coefficient} has exponential dichotomies on $\mathbb{R}_0^+$ and $\mathbb{R}_0^-$. 
For sufficiently large $|\lambda|$, the coefficients of \eqref{first_order_constant_coefficient} and \eqref{rescaled_first_order_sys} are  close to each other, uniformly in $y$. Thus,  from \cite[Theorem 3.1]{Sandstede} it follows that the system \eqref{rescaled_first_order_sys} also has exponential dichotomies and, moreover, the projections corresponding to \eqref{first_order_constant_coefficient} are close to the ones corresponding to \eqref{rescaled_first_order_sys}. So, the Evans functions of \eqref{constant_coeff_equation_c} and 
\eqref{rescaled_equation_c} are uniformly close in $\lambda$. Therefore the Evans function for \eqref{scalar_eq_c} never vanishes for $\Re(\lambda)\geq 0$ and $|\lambda|>C$, where $C$ is some (sufficiently big) constant. 
\end{proof}
In the previous lemma we proved that, if $\lambda$ is an  eigenvalue of \eqref{scalar_eq_c}  with $\Re(\lambda)\geq 0$, then we must  have $|\lambda| \leq C$, for a constant $C$ sufficiently big. 
In the next sections we shall obtain a \emph{quantitative} estimate for that constant $C$ to be able to analyze numerically the behaviour of the Evans function on $\{\Re(\lambda)\geq0,|\lambda| < C\}$.
\subsection{Estimate for the maximum of $|\lambda|$}
In this section we decompose the system into a constant coefficients part, which depends only on the direction $\lambda/|\lambda|$, and a perturbation, which becomes small for large $|\lambda|$. Then, we use exponential dichotomies to estimate the difference between the Evans functions of the constant coefficient system and the perturbed system.

To this end, let us consider
\begin{equation}
\label{system_constant_coefficients1}
\frac{du}{dx}=A(\tilde{\lambda})u
\end{equation}
where $\tilde{\lambda} = \frac{\lambda}{|\lambda|}$ and $A(\tilde{\lambda})$ is defined in \eqref{matrix_first_order_constant_coefficient}. The matrix $A(\tilde{\lambda})$ does not depend on $x$ and it has simple eigenvalues $z_j$, $j=1,...,4$, with $\Re(z_1), \Re(z_2) < 0$ and $\Re(z_3), \Re(z_4) > 0$ and we may consider $\tilde{\lambda}$ fixed. The system \eqref{system_constant_coefficients1} has an exponential dichotomy (see \cite{Coppel}, Chapter 4) on $\mathbb{R}_{0}^+$, namely,   there are positive constants $K$, $\alpha$ and projection $P$ such that
\begin{align*}
\|X(x)PX^{-1}(y)\|_2 \leq K e^{-\alpha (x-y)}, x \geq y &\geq 0,\\
\|X(x)(Id-P)X^{-1}(y)\|_2 \leq K e^{-\alpha (y-x)}, y \geq x &\geq 0,
\end{align*}
where $X(x)$ is the fundamental solutions matrix for \eqref{system_constant_coefficients1} with $X(0)=Id$. We introduce  the usual notations for   the scalar product $\langle v,w\rangle= v \cdot \bar{w}$, the vector norm  $|v| = \sqrt{\langle v, v \rangle}$,  and the 2-norm  $\|A\|_2 = \sup_{|v|=1}|A v|$. Moreover, for later use, let us also introduce the norm $\|A\|_F = \sqrt{\sum_{j,k} |a_{j,k}|^2}$ and recall the inequality $\|A\|_2 \leq \|A\|_F$ holds. 
Since the matrix $A(\tilde{\lambda})$ has constant coefficients, the constants $K$ and $\alpha$ can be explicitly computed.

Now, we rewrite  system \eqref{rescaled_first_order_sys} as the following  perturbed system
\begin{equation}
\label{perturbed_system}
\frac{du}{dx}=A(\tilde{\lambda})u + B(x,\lambda) u,
\end{equation}
where
\begin{equation*}
\lim_{| \lambda | \rightarrow +\infty}\sup_{x \geq 0} \|B(x,\lambda)\|_2 = 0, 
\end{equation*}
and, more precisely,  
\begin{equation*}
\sup_{x \geq 0} \|B(x,\lambda)\|_2 = \mathcal{O}(|\lambda|^{-\frac{1}{2}}). 
\end{equation*}
Denote $\delta = \sup_{x \geq 0} \|B(x,\lambda)\|_2$. If $\delta < \alpha/(4 K^2)$, then the perturbed system \eqref{perturbed_system} also has an exponential dichotomy with projection $Q$ and, moreover 
\begin{equation*}
\|P-Q\|_2 \leq 4 \alpha^{-1} K^3 \delta;
\end{equation*}
see \cite[Chapter 4, Proposition 1]{Coppel} for details. 
Denoting with $M$ and $N$ the 2--dimensional subspaces related to the projections $P$ and $Q$, thanks to  \cite[page 58, Theorem 6.35]{Kato}, there exist unique \emph{orthogonal} projections $\tilde{P}$ and $\tilde{Q}$ onto $M$ and $N$ which verify 
\begin{equation*}
\| \tilde{P}-\tilde{Q} \|_2 \leq \| P - Q \|_2\leq \epsilon,
\end{equation*}
for $\epsilon = 4 \alpha^{-1} K^3 \delta$. 

Let $v_j$ be the eigenvectors of $A(\tilde{\lambda})$ related to the stable eigenvalues $z_j$, $j=1,2$,  and normalized so that $|v_j| = 1$. Then 
we have $v_j \in M$, that is $\tilde{P} v_j = v_j$. Denoting $h_j = v_j - \tilde{Q} v_j$, we have
\begin{equation*}
|h_j| =|v_j - \tilde{Q} v_j | = | \tilde{P} v_j - \tilde{Q} v_j | \leq \| \tilde{P} - \tilde{Q}\|_2 |v_j | \leq \epsilon.
\end{equation*}
Still for $j=1,2$, let  us define $\tilde{v}_j = \tilde{Q} v_j$. Then $\langle \tilde{v}_j, h_j \rangle = \langle \tilde{Q} v_j, v_j - \tilde{Q} v_j \rangle = 0$. Hence $|v_j|^2 = |\tilde{v}_j|^2 + |h_j|^2$ and  therefore $|\tilde{v}_j|^2 \geq |v_j|^2 - \epsilon^2 = 1-\epsilon^2$, as well as $|\tilde{v}_1||\tilde{v}_2| \geq  1-\epsilon^2$. As a consequence,  there exists   $\epsilon_0>0$ such that  for any $ \epsilon < \epsilon_0$, $\tilde{v}_j \neq 0$. Also, 
\begin{equation*}
 |\langle \tilde{v}_1,  \tilde{v}_2 \rangle|  =   |\langle v_1 - h_1,  v_2 - h_2 \rangle| 
 \leq |\langle v_1, v_2 \rangle| + 2 \epsilon + \epsilon^2.
 \end{equation*}
Hence,  since $v_1$ and $v_2$ are linearly independent unit vectors, $|\langle v_1, v_2 \rangle| <1$ and we can further choose $\epsilon_0>0$ such that, if $\epsilon < \epsilon_0$, 
 \begin{equation*} 
 |\tilde{v}_1||\tilde{v}_2| \geq 1-\epsilon^2 > |\langle v_1, v_2 \rangle| + 2 \epsilon + \epsilon^2 \geq |\langle \tilde{v}_1,  \tilde{v}_2 \rangle|.
 \end{equation*}
 Finally,  as $\tilde{v}_1$ and $\tilde{v}_2$ verify strict  Cauchy--Schwarz inequality, they are linearly independent and thus $\{\tilde{v}_1,\tilde{v}_2\}$ is a basis of $N$, namely $N = span(\{\tilde{v}_1,\tilde{v}_2\})$.
Referring to  $\mathbb{R}_0^-$, we  argue in an analogous way to obtain the vectors $\tilde{v}_3$ and $\tilde{v}_4$ needed to compute the Evans function we are looking for. 
Indeed, denoting with  $E(\lambda)$ the Evans function for \eqref{system_constant_coefficients1}, we have   $E(\lambda) = \det([v_1,v_2,v_3,v_4])$. Moreover, if $E_p(\lambda)$ denotes the Evans function for \eqref{perturbed_system}, then $E_p(\lambda) = \det([\tilde{v}_1,\tilde{v}_2,\tilde{v}_3,\tilde{v}_4])$. 

In what follows, we shall obtain sufficient conditions to (numerically) conclude that  $E_p(\lambda)\neq 0$ in the region $|\lambda|\geq C$ of the unstable half-plane. This will be obtained  by proving that 0 can not be an eigenvalue of the matrix $V-H$, where $H = [h_1,...,h_4]$, $V = [v_1,...,v_4]$, using  the Bauer--Fike Theorem \cite{Golub}.
To this end, let us first diagonalize system \eqref{perturbed_system} as follows:
\begin{equation}
\label{diagonal_perturbed_system}
\frac{dv}{dx} = D(\tilde{\lambda})v + S^{-1}B(x,\lambda)S v,
\end{equation}
for $u = S v$, where we denote with $S$ the matrix of eigenvectors of $A(\tilde{\lambda})$ and  $D(\tilde{\lambda}) = diag(z_1,z_2,z_3,z_4) = S^{-1} A(\tilde{\lambda})S $. Since $D(\tilde{\lambda})$ is diagonal, its eigenvectors are given by $e_j$, the standard basis vectors, for $j=1,\dots,4$, and, referring to exponential
dichotomy properties for that diagonalized system, we conclude $P = diag(1,1,0,0)$ and $K = 1$. Moreover, in the notation before,  $V = Id$ and $\det(V) = \|V\|_2=\|V^{-1}\|_2=1$. 
Now, 
let $q$ be an eigenvalue of $V-H$. Hence, a direct application of the Bauer--Fike Theorem implies that
\begin{equation*}
|1-q|\leq\|H\|_2.
\end{equation*}
Since $\|H\|_2 \leq \|H\|_F$, $\|H\|_F<1$ implies in particular  that $0$ can not be an eigenvalue for $V-H$, that is $\det(V-H)\neq 0$, namely,  $E_p(\lambda)\neq 0$.
In next sections we shall prove the above estimate for $|\lambda| \geq C$, with $C$ \emph{explicit}.

To this end, we can directly compute $S^{-1}B(x,\lambda)S$ and obtain an explicit bound for its norms.  For this, let  $\delta_\pm$ (depending on $|\lambda|$) be upper bounds for the norm $\|S^{-1}B(x,\lambda)S\|_F$ on $\mathbb{R}_0^{\pm}$, so that
\begin{equation*}
\|S^{-1}B(x,\lambda)S\|_2 \leq \|S^{-1}B(x,\lambda)S\|_F \leq \delta_\pm,\mbox{ }x \in \mathbb{R}_0^{\pm}.
\end{equation*}
Moreover, denote
\begin{equation}
\epsilon_{\pm} = 4 \alpha^{-1} \delta_\pm
\label{def_eps}
\end{equation}
and consider the condition
\begin{equation}
\sqrt{2}\sqrt{\epsilon_{+}^2+\epsilon_{-}^2}<1\label{cond_epsilon}.
\end{equation}
Clearly, \eqref{cond_epsilon} in particular implies 
$\epsilon_\pm^2 < 1/2$ and, from the definition of $\epsilon_{\pm}$ in \eqref{def_eps}, we readily obtain $\delta_{\pm} < \alpha/4$.
The latter condition on $\delta_{\pm}$ guarantees the existence of exponential dichotomies on $\mathbb{R}_0^\pm$
with  the properties stated above,
which  implies $|h_j| \leq \epsilon_+$, for $j=1,2$, and $|h_j| \leq \epsilon_-$, for $j=3,4$. Hence, we obtain 
\begin{equation*}
\| H \|_F = \sqrt{|h_1|^2 + ... + |h_4|^2} \leq \sqrt{2 \epsilon_+^2 + 2 \epsilon_-^2} = \sqrt{2}\sqrt{\epsilon_+^2 + \epsilon_-^2}< 1.
\end{equation*}
Therefore, if \eqref{cond_epsilon} holds, then we have $\| H \|_F<1$, which shows that the Evans function for \eqref{diagonal_perturbed_system} does not vanish in the value $\lambda$ under consideration, that is $E_p(\lambda) \neq 0$.

The final result we are interesting in, that is the fact that the Evans function for \eqref{scalar_eq_c} does not vanish, is a consequence of the condition $E_p(\lambda) \neq 0$, after a change of varible, which for completeness we shall present here below.

Let   $u \in \mathbb{C}^n$ by a solution of a general system of ODEs:
\begin{equation*}
\frac{du}{dx} = \mathcal{A}(x,\lambda)u, 
\end{equation*}
where $\mathcal{A}(x,\lambda) \in \mathbb{C}^{n \times n}$.
Then, after a change the independent variable $y=c x$, $c>0$, for an invertible matrix $T \in \mathbb{C}^{n \times n}$, $v(y)=T u(y/c)$ solves
\begin{equation*}
\frac{dv}{dy} = \frac{1}{c}T \mathcal{A}\Big{(}\frac{y}{c},\lambda\Big{)}T^{-1}v.
\end{equation*}
Hence, 
let us rewrite \eqref{scalar_eq_c} as a first order system
\begin{equation}
\frac{d\tilde{u}}{dx}=\tilde{A}(x,\lambda)\tilde{u}.\label{first_order_system}
\end{equation}
The rescaled equation \eqref{rescaled_first_order_sys} is obtained from \eqref{first_order_system} by the aforementioned change of variables:
\begin{equation*}
y = |\lambda|^\frac{1}{2}x,\mbox{ }W(y)= \tilde{D} \tilde{u}(y/|\lambda|^{\frac{1}{2}}),
\end{equation*}
where
\begin{equation*}
\tilde{D} = diag(1,|\lambda|^{-\frac{1}{2}},|\lambda|^{-1},|\lambda|^{-\frac{3}{2}}).
\end{equation*}
Moreover, the diagonalized system \eqref{diagonal_perturbed_system} is obtained from \eqref{rescaled_first_order_sys} by a further change of the unknown $v(y)=S^{-1}W(y)$. Suppose now $\tilde{u}^\star(x)$ is an eigenfunction of \eqref{first_order_system}. Then
\begin{equation*}
v^{\star}(x) = S^{-1}\tilde{D}\tilde{u}^{\star}\Big(\frac{x}{|\lambda|^{\frac{1}{2}}}\Big{)}
\end{equation*}
is an eigenfunction of \eqref{diagonal_perturbed_system}, which is impossible if  $E_p(\lambda) \neq 0$. Therefore, there exist no eigenfunctions for \eqref{first_order_system} under this condition, or, equivalently, the Evans function for \eqref{scalar_eq_c} does not vanish, provided $E_p(\lambda) \neq 0$.

Summarizing, to conclude our analysis we shall prove we can find an explicit   constant $C$, such that, for $|\lambda| \geq C$ and $\Re(\lambda)\geq 0$, \eqref{cond_epsilon} is satisfied.
 To this end, in the following sections we shall consider two regimes, namely the viscosity dominant and the dispersion dominant one. In both cases, $\delta_\pm$ can be chosen to be of the form $\sqrt{\tilde{p}(|\lambda|^{-\frac{1}{2}})}$, where $\tilde{p}$ is a polynomial with explicit coefficients. Hence, the condition \eqref{cond_epsilon} involves a function of the same form and we can easily find an explicit constant $C$ such that, for $|\lambda| \geq C$, \eqref{cond_epsilon} is satisfied; see Lemma \ref{upper_bound_lambda_visc} and Lemma \ref{upper_bound_lambda_c} below.
\subsubsection{Estimate for the maximum of $|\lambda|$ --- viscosity dominant case}
In the viscosity dominant case $\mu^2 > 2k^2$, the roots  $z_i$, $i=1,\dots,4$, of the characteristic equation \eqref{char_eq_1} of \eqref{constant_coeff_equation_c} are given by
\begin{equation}
z_{1,3} = \mp\sqrt{w_+}\exp(i \theta/2),\mbox{ } z_{2,4} = \mp\sqrt{w_-}\exp(i \theta/2)\label{roots_visc},
\end{equation}
 where the real, positive  numbers  $w_\pm$ are defined in \eqref{eq:diffdom} and we recall the notation $\tilde\lambda = \exp(i \theta)$.
 
The distances between the roots $|z_j-z_k|$ are
\begin{align*}
|z_1-z_2| &= |z_3-z_4| = \sqrt{w_+} - \sqrt{w_-},\\
|z_1-z_3| &= 2 \sqrt{w_+},\\
|z_1-z_4| &= |z_2-z_3| = \sqrt{w_+} + \sqrt{w_-},\\
|z_2-z_4| &= 2 \sqrt{w_-}.
\end{align*}
In preparation to stating Lemma \ref{upper_bound_lambda_visc} let us introduce the notation
\begin{equation}\label{bounds_k}
\begin{aligned}
m_1(\lambda) &= \frac{2}{k^2}|\lambda|^{-1} \sup_{x \geq 0} \left\lvert  -f_2' - g_1 + (f_2 + g_2)\frac{R'}{R} - \mu R (R^{-1})'' \right\rvert,\\
m_2(\lambda) &= \frac{2}{k^2}\Big{(}|\lambda|^{-\frac{3}{2}}  \sup_{x \geq 0} \left\lvert R(f_1' + f_3 + (f_2 + g_2)f_6'+\mu f_6''+g_3) \right. \\
&\ \left. +(s-U)(f_2'+g_1) \right \rvert
+|\lambda|^{-\frac{1}{2}} \sup_{x \geq 0} \left\lvert 2(U-s)+\mu \frac{R'}{R} \right \rvert\Big{)},\\
m_3(\lambda) &= \frac{2}{k^2}|\lambda|^{-1} \sup_{x \geq 0} \left\lvert R(f_1+f_4+2\mu f_6'+g_4) + (s-U)(f_2+g_2) \right \rvert,\\
m_4(\lambda) &= 2|\lambda|^{-\frac{1}{2}} \sup_{x \geq 0} \left\lvert \frac{s \mu}{k^2} - \frac{R'}{R}  \right \rvert.
\end{aligned}
\end{equation}
We have $|z_1|=|z_3|=\sqrt{w_+}$ and $|z_2|=|z_4|=\sqrt{w_-}$. Furthermore,
\begin{align*}
\tilde{b}_{j,k} &= \frac{p_k}{q_j},\\
 p_k &= \sum_{l=1}^4 m_l(\lambda)|z_k|^{l-1}=\begin{dcases}
\sum_{l=1}^4 m_l(\lambda)(w_+)^\frac{l-1}{2}, & k \in \{1,3\}\\
\sum_{l=1}^4 m_l(\lambda)(w_-)^\frac{l-1}{2}, & k \in \{2,4\},
 \end{dcases}
 \end{align*}
 and
 \begin{align*}
q_1 &= |z_1-z_2||z_1-z_3||z_1-z_4|, q_2 = |z_1-z_2||z_2-z_3||z_2-z_4|,\\
q_3 &= |z_1-z_3||z_2-z_3||z_3-z_4|, q_4 = |z_1-z_4||z_2-z_4||z_3-z_4|.
\end{align*}
Hence,
\begin{align*}
q_1 &= q_3 = 2 \sqrt{w_+}(w_+ - w_-),\\
q_2 &= q_4 = 2 \sqrt{w_-}(w_+ - w_-),
\end{align*}
and
\begin{equation}
\tilde{B}_{+} = [\tilde{b}_{j,k}].
\label{residual_matrix_visc}
\end{equation}
Analogously, the matrix $\tilde{B}_{-}$ is constructed in the same way with the suprema in the definition of $m_k(\lambda)$ taken for $x \leq 0$. 
Finally, we consider
\begin{equation*}
\epsilon_{\pm} = 4 \alpha^{-1} \Vert \tilde{B}_{\pm} \Vert_F, 
\end{equation*}
that is, 
we can choose
$\delta_\pm = \|\tilde{B}_{\pm}\|_F$
and, being $\alpha = \min_k |\Re(z_k)|$, in view of \eqref{roots_visc} we can take 
\begin{equation*}
\alpha~=\sqrt{\frac{w_-}{2}}.
\end{equation*}
\begin{lemma}\label{upper_bound_lambda_visc}
Suppose $\mu^2 > 2k^2$. 
Then, we can find a   constant $C>0$ such that   $\sqrt{2}\sqrt{\epsilon_{+}^2+\epsilon_{-}^2}<~1$ for all $\lambda$ with $\Re(\lambda)\geq 0$ and $|\lambda|\geq C$. As a consequence,  the Evans function $E(\lambda)$ for \eqref{scalar_eq_c}   has no zeros in this region.
\end{lemma}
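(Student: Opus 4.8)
\medskip
\noindent\textbf{Proof plan.}
The plan is to show that the quantity $\sqrt{2}\sqrt{\epsilon_{+}^2+\epsilon_{-}^2}$, which by \eqref{def_eps} equals $4\alpha^{-1}\sqrt{2}\sqrt{\|\tilde B_{+}\|_F^2+\|\tilde B_{-}\|_F^2}$, tends to $0$ as $|\lambda|\to+\infty$ uniformly for $\Re(\lambda)\geq 0$, and then to read off an explicit threshold $C$ from the fact that this quantity is the square root of an explicit polynomial in $|\lambda|^{-1/2}$ with no constant term. The ``as a consequence'' part is then immediate from the discussion preceding the lemma: once \eqref{cond_epsilon} holds one gets $\delta_{\pm}<\alpha/4$, hence the exponential dichotomies on $\mathbb{R}_0^{\pm}$ exist with $\|P-Q\|_2\leq\epsilon_{\pm}$, hence $|h_j|\leq\epsilon_{\pm}$ and $\|H\|_F<1$, which by the Bauer--Fike theorem \cite{Golub} forces $E_p(\lambda)\neq 0$, and the chain of changes of variables $y=|\lambda|^{1/2}x$, $W=\tilde D\tilde u$, $v=S^{-1}W$ transfers this to the non-vanishing of the Evans function for \eqref{scalar_eq_c}.

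For the first step I would begin by recording the profile regularity coming from Theorem \ref{theo:exprof} and \cite{Zhelyazov1}: the profile $[R,U]$ is smooth, bounded together with all its derivatives on $\mathbb{R}$, converges exponentially to the end states $[R^{\pm},U^{\pm}]$ as $x\to\pm\infty$, and, crucially, stays away from vacuum, $\inf_{\mathbb{R}}R>0$. Consequently every coefficient function entering \eqref{bounds_k} --- the $f_i$, $g_i$, $f_6$ and the finitely many derivatives of them that occur --- is bounded on $\mathbb{R}$, so all the suprema over $x\geq 0$ (and over $x\leq 0$) appearing in \eqref{bounds_k} are finite. Hence there are constants $\kappa_1,\dots,\kappa_4$ depending only on the profile and on $\mu,k,s$ with $m_1(\lambda),m_3(\lambda)\leq\kappa_{1,3}|\lambda|^{-1}$ and $m_2(\lambda),m_4(\lambda)\leq\kappa_{2,4}|\lambda|^{-1/2}$ for $|\lambda|\geq 1$; in particular each $m_l(\lambda)=\mathcal{O}(|\lambda|^{-1/2})$ and these bounds depend on $\lambda$ only through $|\lambda|$.

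Next I would use that $\mu^2>2k^2$ makes the numbers $w_{\pm}=\mu/k^2\pm k^{-1}\sqrt{\mu^2/k^2-2}$ real with $w_{+}>w_{-}>0$; therefore the moduli $|z_k|\in\{\sqrt{w_{+}},\sqrt{w_{-}}\}$, the gaps $|z_j-z_k|$, and hence the denominators $q_j\in\{2\sqrt{w_{+}}(w_{+}-w_{-}),\,2\sqrt{w_{-}}(w_{+}-w_{-})\}$ are fixed positive constants depending only on $\mu,k$. The same is true for $\alpha=\sqrt{w_{-}/2}$; here I would flag the one place where the right half-plane is used, namely that for $\tilde\lambda=e^{i\theta}$ with $\Re(\lambda)\geq 0$ one has $|\cos(\theta/2)|\geq\sqrt{2}/2$, so by \eqref{roots_visc} $\min_k|\Re(z_k)|=\sqrt{w_{-}}\,|\cos(\theta/2)|\geq\sqrt{w_{-}/2}$, making the choice $\alpha=\sqrt{w_{-}/2}$ admissible uniformly in $\arg\lambda$. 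Feeding the $\mathcal{O}(|\lambda|^{-1/2})$ bounds for the $m_l$ into $p_k=\sum_{l=1}^{4}m_l(\lambda)|z_k|^{l-1}$, then into $\tilde b_{j,k}=p_k/q_j$ and $\|\tilde B_{\pm}\|_F$, yields $\|\tilde B_{\pm}\|_F\leq\sqrt{\tilde p_{\pm}(|\lambda|^{-1/2})}$ with $\tilde p_{\pm}$ explicit polynomials all of whose monomials carry a positive power of $|\lambda|^{-1/2}$. Thus $\epsilon_{\pm}=4\alpha^{-1}\sqrt{\tilde p_{\pm}(|\lambda|^{-1/2})}\to 0$ as $|\lambda|\to+\infty$, uniformly on $\{\Re(\lambda)\geq 0\}$, and $\sqrt{2}\sqrt{\epsilon_{+}^2+\epsilon_{-}^2}<1$ becomes the polynomial inequality $32\alpha^{-2}\bigl(\tilde p_{+}(t)+\tilde p_{-}(t)\bigr)<1$ in $t=|\lambda|^{-1/2}$ with vanishing constant term, which holds for all small enough $t>0$, i.e.\ for $|\lambda|\geq C$ with $C$ computable explicitly from the coefficients.

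As for difficulty, there is essentially no conceptual obstacle: the statement is a quantitative version of the elementary fact that the perturbation $B(x,\lambda)$ has operator norm $\mathcal{O}(|\lambda|^{-1/2})$ while the constant-coefficient data $A(\tilde\lambda)$, its eigenvalues and its spectral gaps are bounded away from degeneracy as soon as $\mu^2>2k^2$. The one point needing a little care is uniformity in $\arg\lambda$ over the closed right half-plane, handled by the $|\cos(\theta/2)|\geq\sqrt{2}/2$ remark; everything else is the bookkeeping required to make the constants $\kappa_l$, the polynomials $\tilde p_{\pm}$, and ultimately $C$ explicit, which is precisely the purpose of Lemma \ref{upper_bound_lambda_visc}.
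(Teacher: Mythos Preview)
Your proposal is correct and follows essentially the same approach as the paper: verify that the $\tilde b_{j,k}$ dominate the entries of $S^{-1}B(x,\lambda)S$ (with the $m_l(\lambda)$ bounding $|b_{4,l}(x,\lambda)|$ and the $|z_k|$, $|z_j-z_k|$ coming from $S$, $S^{-1}$), observe that these entries are monotonically decreasing in $|\lambda|$ and vanish as $|\lambda|\to+\infty$, and read off an explicit $C$. Your write-up is in fact more detailed than the paper's --- in particular your justification of the uniform choice $\alpha=\sqrt{w_-/2}$ via $|\cos(\theta/2)|\geq\sqrt{2}/2$ for $\Re(\lambda)\geq 0$ makes explicit a point the paper only records as a formula --- but the route is the same.
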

\begin{proof}
We compute directly the matrix $S^{-1}B(x,\lambda)S$ in \eqref{diagonal_perturbed_system} and we see that $\tilde{b}_{j,k}$ are upper bounds for the absolute values of its entries.
The terms $m_k(\lambda)$ in \eqref{bounds_k} are upper bounds  for the terms $b_{4,k}(\lambda)$ of the matrix  $B(x,\lambda)$ from \eqref{perturbed_system}:
   $|b_{4,k}(x,\lambda)|\leq m_k(\lambda)$, while $|z_k|$ and $|z_j-z_k|$ come from $S$ and $S^{-1}$.  Moreover, note that the matrices $\tilde{B}_{\pm}(\lambda)$ from \eqref{residual_matrix_visc} have monotonically decreasing in $|\lambda|$ entries, therefore, from the bound above, we can easily find $C>0$ such that    $\sqrt{2}\sqrt{\epsilon_{+}^2+\epsilon_{-}^2}<~1$ for all $\lambda$ with $\Re(\lambda)\geq 0$ and $|\lambda|\geq C$ and the proof is complete.
\end{proof}
\subsubsection{Estimate for the maximum of $|\lambda|$ --- dispersion dominant case}
For the sake of simplicity, we fix in this section   $k=\sqrt{2}$ so that the dispersion dominant case reduces to $\mu^2 < 4$ and, with the definition in \eqref{eq:dispdom},   
 \begin{equation*}
|w_{1,2}| = \left |\frac{\mu}{2}\pm i \sqrt{1 - \frac{\mu^2}{4}}\right |=1.
\end{equation*}
Moreover, still in preparation to stating Lemma \ref{upper_bound_lambda_c}, let us introduce the following notations:
\begin{equation}\label{eq:rootsdisp}
\begin{aligned}
\theta_{1,2} &= \mathop{Arg}\left (\frac{\mu}{2}\pm i \sqrt{1 - \frac{\mu^2}{4}}\right ),  \\
z_{1,3} &= \mp \exp(i(\theta+\theta_1)/2),\mbox{ }z_{2,4} = \mp \exp(i(\theta+\theta_2)/2),
\end{aligned}
\end{equation}
where, as before, we recall  $\tilde\lambda = \exp(i \theta)$.
The distances $|z_j-z_k|$ between the roots  of the characteristic equation \eqref{char_eq_1} of \eqref{constant_coeff_equation_c} still do not depend on $\theta$ and we can compute them e.g. for $\theta = 0$.\\
Also,
\begin{align}
m_1(\lambda) &= |\lambda|^{-1} \sup_{x \geq 0} \left\lvert  -f_2' - g_1 + (f_2 + g_2)\frac{R'}{R} - \mu R (R^{-1})'' \right\rvert, \nonumber\\
m_2(\lambda) &= |\lambda|^{-\frac{3}{2}}  \sup_{x \geq 0} \left\lvert R(f_1' + f_3 + (f_2 + g_2)f_6'+\mu f_6''+g_3)
+(s-U)(f_2'+g_1) \right \rvert \nonumber\\
&+|\lambda|^{-\frac{1}{2}} \sup_{x \geq 0} \left\lvert 2(U-s)+\mu \frac{R'}{R} \right \rvert, \nonumber\\
m_3(\lambda) &= |\lambda|^{-1} \sup_{x \geq 0} \left\lvert R(f_1+f_4+2\mu f_6'+g_4) + (s-U)(f_2+g_2) \right \rvert, \nonumber\\
m_4(\lambda) &= |\lambda|^{-\frac{1}{2}} \sup_{x \geq 0} \left\lvert s \mu - \frac{2 R'}{R}  \right \rvert.
\label{bounds}
\end{align}
Moreover,
\begin{align}
\tilde{b}_{j,k} &= \frac{p_k}{q_j}, p_k = \sum_{l=1}^4 m_l(\lambda)|z_k|^{l-1}=\sum_{l=1}^4 m_l(\lambda), \nonumber\\
q_1 &= |z_1-z_2||z_1-z_3||z_1-z_4|, q_2 = |z_1-z_2||z_2-z_3||z_2-z_4|, \nonumber\\
q_3 &= |z_1-z_3||z_2-z_3||z_3-z_4|, q_4 = |z_1-z_4||z_2-z_4||z_3-z_4|, \nonumber\\
\tilde{B}_{+} &= [\tilde{b}_{j,k}],
\label{residual_matrix}
\end{align}
As before, the matrix $\tilde{B}_{-}$ has the suprema in the definition of $m_k(\lambda)$ taken for $x \leq 0$
and we define
\begin{equation*}
\epsilon_{\pm} = 4 \alpha^{-1} \Vert \tilde{B}_{\pm} \Vert_F, 
\end{equation*}
that is
$\delta_\pm = \|\tilde{B}_{\pm}\|_F$.
We have $0<\theta_1<\pi/2$, $-\pi/2 < \theta_2 < 0$, $\theta_2 = - \theta_1$, and recall $\tilde{\lambda} = \exp(i \theta)$, $\theta \in [-\pi/2,  \pi/2]$. Then
\begin{equation*}
-\frac{\pi}{4}+\frac{\theta_j}{2} \leq \frac{\theta + \theta_j}{2}\leq \frac{\pi}{4}+\frac{\theta_j}{2},\mbox{ }j=1,2.
\end{equation*}
Hence,
\begin{equation*}
-\frac{\pi}{4} < \frac{\theta + \theta_1}{2} < \frac{\pi}{2},\mbox{ }-\frac{\pi}{2} < \frac{\theta + \theta_2}{2} < \frac{\pi}{4}.
\end{equation*}
Since $\cos (x)$ is positive for $-\pi/2 < x < \pi/2$, in view of \eqref{eq:rootsdisp}, we have
\begin{align*}
|\Re(z_1)| &= |\Re(z_3)| = \cos\Big{(}\frac{\theta+\theta_1}{2}\Big{)},\\
|\Re(z_2)| &= |\Re(z_4)| = \cos\Big{(}\frac{\theta+\theta_2}{2}\Big{)}.
\end{align*}
Moreover,
\begin{equation*}
\alpha = \min_{\theta \in [-\pi/2,\pi/2]} \min_{k \in \{1,...,4\}} |\Re(z_k)|.
\end{equation*}
Since $\cos (x)$ is strictly increasing on $[-\pi/2,0]$, and strictly decreasing on $[0,\pi/2]$, it follows in particular that $\cos((\theta + \theta_1)/2)$ is strictly increasing for $\theta \in [-\pi/2,-\theta_1]$, and strictly decreasing for $\theta \in [-\theta_1,\pi/2]$. Hence,
\begin{align*}
\min_{\theta \in [-\pi/2,-\theta_1]}\cos\Big{(}\frac{\theta+\theta_1}{2}\Big{)} &= \cos\Big{(}-\frac{\pi}{4} + \frac{\theta_1}{2}\Big{)} = \cos\Big{(}\frac{\pi}{4} - \frac{\theta_1}{2}\Big{)},\\
\min_{\theta \in [-\theta_1,\pi/2]}\cos\Big{(}\frac{\theta+\theta_1}{2}\Big{)} &= \cos\Big{(}\frac{\pi}{4} + \frac{\theta_1}{2}\Big{)}.
\end{align*}
Moreover, in view of the following inequalities
\begin{align*}
0 < \frac{\pi}{4} - \frac{\theta_1}{2} < \frac{\pi}{4},
\\
\frac{\pi}{4} < \frac{\pi}{4} + \frac{\theta_1}{2} < \frac{\pi}{2},
\\
\frac{\pi}{4} + \frac{\theta_1}{2} > \frac{\pi}{4} - \frac{\theta_1}{2},
\end{align*}
we conclude
\begin{equation*}
\cos\Big{(} \frac{\pi}{4} + \frac{\theta_1}{2}\Big{)} < \cos\Big{(} \frac{\pi}{4} - \frac{\theta_1}{2}\Big{)}.
\end{equation*}
Finally, we obtain
\begin{equation*}
\min_{\theta \in [-\pi/2,\pi/2]}\cos\Big{(}\frac{\theta+\theta_1}{2}\Big{)} = \cos\Big{(}\frac{\pi}{4} + \frac{\theta_1}{2}\Big{)},
\end{equation*}
that is, the minimum is attained for $\bar{\theta} = \pi/2$.
Similarly, it follows that
\begin{equation*}
\min_{\theta \in [-\pi/2,\pi/2]}\cos\Big{(}\frac{\theta+\theta_2}{2}\Big{)} = \cos\Big{(}-\frac{\pi}{4} + \frac{\theta_2}{2}\Big{)} = \cos\Big{(}-\frac{\pi}{4} - \frac{\theta_1}{2}\Big{)} =
\cos\Big{(}\frac{\pi}{4} + \frac{\theta_1}{2}\Big{)}
\end{equation*}
and therefore we can take
\begin{equation*}
\alpha = \Re(\exp(i(\bar\theta+\theta_1)/2));\ \bar\theta=\pi/2.
\end{equation*}
\begin{lemma}\label{upper_bound_lambda_c}
Suppose $\mu < 2$. 
Then, we can find a  constant $C>0$ such that  $\sqrt{2}\sqrt{\epsilon_{+}^2+\epsilon_{-}^2}<~1$ for all $\lambda$ with $\Re(\lambda)\geq 0$ and $|\lambda|\geq C$. As a consequence,  the Evans function $E(\lambda)$ for \eqref{scalar_eq_c}  has no zeros in this region.
\end{lemma}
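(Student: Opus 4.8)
The plan is to follow verbatim the scheme already used for the viscosity dominant case in Lemma \ref{upper_bound_lambda_visc}, replacing the data of that regime with the ones collected above for $\mu<2$ (recall $k=\sqrt2$ is fixed here): the roots $z_{1,3},z_{2,4}$ of the limiting characteristic equation \eqref{char_eq_1} given in \eqref{eq:rootsdisp}, the bounds $m_1,\dots,m_4$ in \eqref{bounds}, the matrices $\tilde B_\pm$ in \eqref{residual_matrix}, and the spectral gap $\alpha=\Re\!\big(\exp(i(\bar\theta+\theta_1)/2)\big)$ with $\bar\theta=\pi/2$, as computed in the discussion preceding the statement.

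First I would diagonalize the perturbed system as in \eqref{diagonal_perturbed_system}, with $S=[v_1,v_2,v_3,v_4]$ the (Vandermonde) matrix of eigenvectors $v_i=[1,z_i,z_i^2,z_i^3]^T$ of $A(\tilde\lambda)$; this is legitimate since, as established in the previous lemma, the $z_i$ are distinct whenever $\mu^2\neq 2k^2$, so that $q_j=\prod_{l\neq j}|z_j-z_l|>0$. The perturbation $B(x,\lambda)$ produced by rewriting \eqref{rescaled_equation_c} as the first order system \eqref{rescaled_first_order_sys} has only its fourth row nonzero, and a direct inspection of the coefficients of \eqref{rescaled_equation_c} shows $|b_{4,k}(x,\lambda)|\le m_k(\lambda)$ on $\mathbb R_0^+$, with $m_k$ as in \eqref{bounds} (and the analogous bound on $\mathbb R_0^-$, with suprema for $x\le 0$). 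Using the cofactor formula for the inverse of a Vandermonde matrix one then checks, exactly as in the proof of Lemma \ref{upper_bound_lambda_visc}, that every entry of $S^{-1}B(x,\lambda)S$ is bounded in modulus by $\tilde b_{j,k}=p_k/q_j$, whence $\|S^{-1}B(x,\lambda)S\|_2\le\|S^{-1}B(x,\lambda)S\|_F\le\|\tilde B_+\|_F=:\delta_+$ on $\mathbb R_0^+$, and similarly $\delta_-=\|\tilde B_-\|_F$ on $\mathbb R_0^-$.

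Next, set $\epsilon_\pm=4\alpha^{-1}\delta_\pm$ as in \eqref{def_eps}. Since $|z_k|=1$ for every $k$ here, one has $p_k=\sum_{l=1}^4 m_l(\lambda)$, while the $q_j$ are fixed positive constants independent of $|\lambda|$, and each $m_l(\lambda)$ is a finite sum of terms of the form (explicit constant)$\cdot|\lambda|^{-\sigma}$ with $\sigma\in\{\tfrac12,1,\tfrac32\}$. Consequently the entries of $\tilde B_\pm$, hence $\|\tilde B_\pm\|_F$ and $\epsilon_\pm$, are monotonically decreasing in $|\lambda|$ and tend to $0$ as $|\lambda|\to+\infty$; the same is then true of $\sqrt2\sqrt{\epsilon_+^2+\epsilon_-^2}$. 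Therefore there is an explicit $C>0$ — obtained by solving the scalar inequality $\sqrt2\sqrt{\epsilon_+^2+\epsilon_-^2}<1$, which involves only a polynomial in $|\lambda|^{-1/2}$ — such that \eqref{cond_epsilon} holds for all $\lambda$ with $\Re(\lambda)\ge0$ and $|\lambda|\ge C$. Finally, as shown in the discussion preceding Lemma \ref{upper_bound_lambda_visc}, \eqref{cond_epsilon} forces $\|H\|_F<1$, so by the Bauer--Fike theorem $0$ is not an eigenvalue of $V-H$, i.e.\ $E_p(\lambda)\neq0$; undoing the changes of variables $y=|\lambda|^{1/2}x$, $W=\tilde D\tilde u$, $v=S^{-1}W$ then gives that the Evans function for \eqref{scalar_eq_c} does not vanish on $\{\Re(\lambda)\ge0,\ |\lambda|\ge C\}$.

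The only genuinely delicate point is the verification that $\tilde b_{j,k}$ dominates the entries of $S^{-1}B(x,\lambda)S$: this requires writing $S^{-1}$ explicitly (its $j$-th row is, up to sign, the elementary symmetric functions of $\{z_l:l\neq j\}$ divided by $\prod_{l\neq j}(z_j-z_l)$), using that $B$ acts only through its fourth row so that the $(j,k)$ entry of $S^{-1}B(x,\lambda)S$ is the product of $(S^{-1})_{j4}$ with the pairing of the fourth row of $B(x,\lambda)$ against the column $v_k$, and bounding the components of $v_k$ by powers of $|z_k|=1$; the estimates on $m_k$ are then merely the triangle inequality applied to the coefficients of \eqref{rescaled_equation_c}. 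Everything else is routine once the monotonicity in $|\lambda|$ is observed; one should also record that, by the computations preceding the statement, $\alpha=\cos(\pi/4+\theta_1/2)$ with $\theta_1=\mathop{Arg}(\mu/2+i\sqrt{1-\mu^2/4})$, so that the resulting $C$ is explicit in $\mu$ and in the profile-dependent suprema appearing in \eqref{bounds}.
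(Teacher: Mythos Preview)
Your proposal is correct and follows essentially the same approach as the paper's own proof: compute $S^{-1}B(x,\lambda)S$ directly, observe that $\tilde b_{j,k}$ bounds its entries (via the Vandermonde structure of $S$, the fact that only the fourth row of $B$ is nonzero, and $|z_k|=1$), and then use the monotone decrease of the entries of $\tilde B_\pm$ in $|\lambda|$ to produce the constant $C$. If anything, you supply more detail than the paper, which simply refers back to the argument of Lemma~\ref{upper_bound_lambda_visc} and records the same observations in two sentences.
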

\begin{proof}
As in the viscosity dominant case, we compute directly the entries of the matrix $S^{-1}B(x,\lambda)S$ in \eqref{diagonal_perturbed_system} and we observe that $\tilde{b}_{j,k}$ are upper bounds for their absolute values. 
The terms $m_k(\lambda)$ in \eqref{bounds} are upper bounds  for the terms $b_{4,k}(\lambda)$ of the matrix  $B(x,\lambda)$ from \eqref{perturbed_system}:
   $|b_{4,k}(x,\lambda)|\leq m_k(\lambda)$, while the terms $|z_k|$ and $|z_j-z_k|$ come from $S$ and $S^{-1}$.  Moreover, as in the proof of Lemma \ref{upper_bound_lambda_visc}, the matrices $\tilde{B}_{\pm}(\lambda)$ from \eqref{residual_matrix} 
 have  monotonically decreasing in $|\lambda|$ entries, therefore, from the bound above, we can easily find $C>0$ such that    $\sqrt{2}\sqrt{\epsilon_{+}^2+\epsilon_{-}^2}<~1$ for all $\lambda$ with $\Re(\lambda)\geq 0$ and $|\lambda|\geq C$ and the proof is complete.
\end{proof}
\subsection{Numerical evidence of point spectrum stability}\label{numerics}
To conclude our analysis leading to point spectrum stability, we shall now exclude the presence of  eigenvalue in a bounded region $|\lambda|\leq C$ inside the unstable half-plane, where $C$ is given in terms of the quantitative bound about the modulus of possible eigenvalues obtained  above. To this end, 
under the assumption that the Evans function $E(\lambda)$ is analytic in the region surrounded by a closed contour $\Gamma$, and it does not vanish on the contour, we can use the winding number
\begin{equation}
\label{log_idnicator1}
\frac{1}{2 \pi i} \int_{\Gamma}\frac{E'(z)}{E(z)}dz
\end{equation}
to count the number of zeros inside the contour.  The remaining part of this paper is devoted to provide numerical evidence that the integral \eqref{log_idnicator1} is indeed zero in a sufficiently large contour $\Gamma$ lying in the unstable half-plane, according to the aforementioned quantitative bound.

Specifically, to compute the Evans function numerically, we use the compound matrix method; for instance, see \cite{Humpherys}.
This method  is used in order to get a stable numerical procedure, in spite of the fact that the system $Y'=M(y,\lambda)Y$ is numerically stiff.
Specifically, the compound matrix $B(y,\lambda)$ is given by:\
\begin{equation*}
B=\begin{bmatrix}
    m_{11}+m_{22} & m_{23} & m_{24} & -m_{13} & -m_{14} & 0 \\
    m_{32} & m_{11}+m_{33} & m_{34} & m_{12} & 0 & -m_{14} \\
    m_{42} & m_{43} & m_{11}+m_{44} & 0 & m_{12} & m_{13} \\
    -m_{31} & m_{21} & 0 & m_{22}+m_{33} & m_{34} & -m_{24} \\
    -m_{41} & 0 & m_{21} & m_{43} & m_{22}+m_{44} & m_{23} \\
     0 & -m_{41} & m_{31} & -m_{42} & m_{32} & m_{33}+m_{44}
  \end{bmatrix}.
\end{equation*}
We integrate the equation $\phi'=(B(y,\lambda)-\mu^-)\phi$ numerically  on a sufficiently large interval $[-L_1,0]$, where $\mu^-$ is the unstable eigenvalue of $B$ at $-\infty$ with maximal (positive) real part. Denote the profile $[R(y), U(y)]$ by $\zeta(y)$. Given a numerical approximation of $(\zeta_k)_{k=1}^N$ of $\zeta(y)$ at points $(y_k)_{k=1}^N$ with $-L_1 = y_1 < y_2 < ... < y_N = L_1$, let $\tilde{\zeta}(y)$ be the piecewise linear interpolant of $(y_1,\zeta_1),...,(y_N,\zeta_N)$. We obtain the matrix $B(y,\lambda)$ using $\tilde{\zeta}(y)$.
Similarly we integrate the equation $\phi'=(B(y,\lambda)-\mu^+)\phi$ on $[0,L_1]$ backwards, where this time  $\mu^+$ is the stable eigenvalue of $B$ at $+\infty$ with minimal (negative) real part. Then, the coefficients $\mu^{\pm}$ compensate for the growth/decay at infinity. Finally, the Evans function can be constructed by means of linear combination of the components of the two solutions $\phi^\pm = (\phi^\pm_1, \dots, \phi^\pm_6)$ as follows:
\begin{equation*}
E(\lambda)=\phi^-_1\phi^+_6-\phi^-_2\phi^+_5+\phi^-_3\phi^+_4+\phi^-_4\phi^+_3-\phi^-_5\phi^+_2+\phi^-_6\phi^+_1\Big |_{y=0}.
\end{equation*}

For our calculations we use $L_1 = 40$ and we confine ourselves to 
 %
%
%
the following set of parameters, included in the  dispersion dominant case, as defined above:
\begin{equation*}
P^+ = 0.6,\mbox{ }P^- = 0.8,\mbox{ }s=1,\mbox{ } \gamma = 3/2,\mbox{ }\mu = 1,\mbox{ }k = \sqrt{2},
\end{equation*}
where $P^{\pm} = \sqrt{R^{\pm}}$.
The values for the velocity defining an admissible Lax 2--shock are $U^+ = -0.32$ and $U^- = 0.25$. The sufficient conditions for existence of profile of \cite[Lemma 1]{Zhelyazov1}, case (i) are verified. Moreover, since 
\begin{equation*}
|U^+| = 0.32 < 0.95 = c_s(R^+),
\end{equation*}
conditions (i) of \cite[Corollary 2]{Zhelyazov1} hold as well. Also the condition of \cite[Lemma 1]{Zhelyazov1}, case (i) is satisfied, because
\begin{equation*}
\frac{s \mu}{k} = 0.71< 1.3 = \sqrt{-f'(P^+)},
\end{equation*}
and the profile is non-monotone.

As pointed out already in Section \ref{subsec:intvar},  to avoid the smallness of $E(\lambda)$ near zero, we use integrated variables, namely, we solve the ODEs
$\phi'=(\hat B(y,\lambda)-\mu^\pm)\phi$, where the compound matrix $\hat B$ is constructed from $\hat M$ defined in \eqref{mat_M1_c}. 
To numerically check the Evans function is indeed well defined and different from zero at $\lambda =0$, we evaluate it on a small semi-circular contour without a vertical segment in the unstable half-plane with radius $10^{-6}$ and with center at $\lambda=0$, showing that it is almost constant (and non zero); see Figure \ref{figure_evans_function_small_contour}.
\begin{figure}[h]
\includegraphics[scale=0.6]{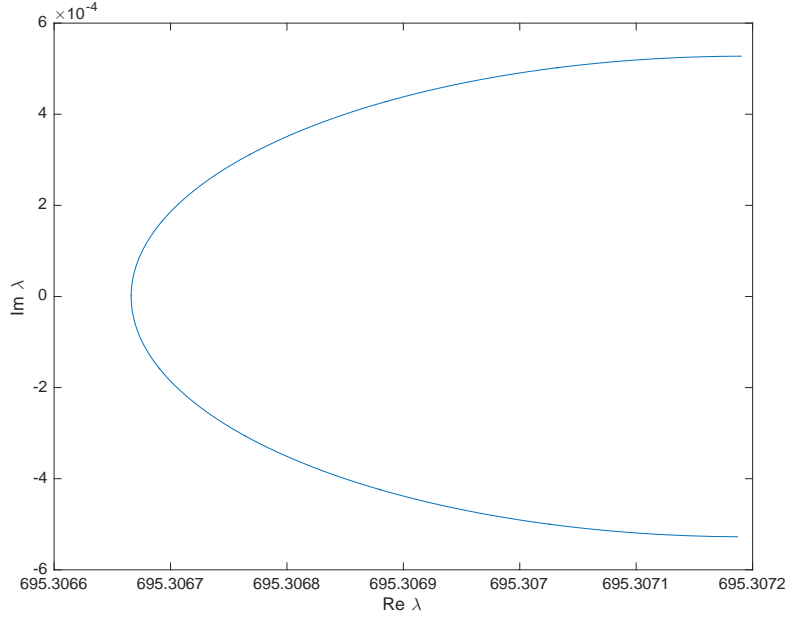}
\caption{The image of a small semi-circular contour without a vertical segment, with radius $10^{-6}$, and center at the origin through $E(\lambda)$.}
\label{figure_evans_function_small_contour}
\end{figure}

 To compute the initial conditions we integrate the reduced Kato ODE
\begin{equation*}
\frac{dr_\pm}{d\lambda}=\frac{d\mathcal{P}_\pm}{d\lambda}r_\pm,
\end{equation*}
where $\mathcal{P}_\pm$ stands for the spectral projection of $\hat{B}^{\pm} = \lim_{y \rightarrow \pm \infty}\hat{B}(y,\lambda)$ associated to $\mu^{\pm}$. For this, we use  the algorithm from \cite{Zumbrun}, that is $|r_\pm^1|=1$ eigenvector as before (referring to maximal/minimal decay/growth rate of $B^\pm$) and for $k>0$
\begin{equation*}
r_\pm^{k+1}=\mathcal{P}_\pm^{k+1}r_\pm^k.
\end{equation*}

It is worth observing that, since the Evans functions corresponding to the profiles $\zeta(y)$ and $\zeta(y-y_0)$ have the same zero set and the estimate for the constant $C$ that bounds the modulus of eigenvalues with nonegative real part provided by Lemma \ref{upper_bound_lambda_c} depends on $y_0$, it may be possible to obtain a smaller bound for $C$ by shifting the profile.
Therefore, let $y_0 \in (-L_1,L_1)$ so that the translated interpolant $\tilde{\zeta}(y-y_0)$ is defined on $[-L_1 + y_0,L_1 + y_0]$. We discretize the two domains $[-L_1 + y_0, 0]$ and $[0,L_1 + y_0]$ using uniform grids $(\tilde{y}_k)_{k=1}^{N_-}$ and $(\bar{y}_k)_{k=1}^{N_+}$, with
\begin{align*}
\tilde{y}_k &= -L_1 + y_0 + (k-1) \Delta y^-,\mbox{ }k=1,...,N_-,\\
\bar{y}_k &= (k-1) \Delta y^+,\mbox{ }k=1,...,N_+,
\end{align*}
where $\Delta y^{\pm}$ are the grid sizes. Then, we compute the suprema in \eqref{bounds} on the grids and construct the matrices $\tilde{B}_{\pm}$. Finally, we evaluate $\delta_{\pm}$ and $\epsilon_{\pm}$. It is sufficient to choose $\lambda \in \mathbb{R^+}$ sufficiently large, so that the condition of Lemma \ref{upper_bound_lambda_c} is satisfied.
We choose $y_0 = 10$ and, being the interpolant $\tilde{\zeta}(y)$  defined on $[-40,40]$, we have that  $\tilde{\zeta}(y-y_0)$ is defined on $[-30, 50]$. Moreover, we set $\Delta y^\pm= 0.1$. With these choices, using Lemma \ref{upper_bound_lambda_c}, we obtain numerically that  there are no eigenvalues for  $|\lambda|\geq 1.5 \cdot 10^4$. In the sequel we complement this information with a numerical evidence of absence of eigenvalues inside that circle.

We have to initialize the computation on the real axis and, for stability reasons, for a not very large value of $\lambda$. For these reasons, we cover the region  of the unstable half-plane inside $|\lambda| \leq 1.5 \cdot 10^4$ with the union of the areas surrounded by the following two contours:
\begin{enumerate}
\item One semi-circular contour with radius 10, center at $\lambda = 0$ and vertical segment on the imaginary axis. Here we do not evaluate the Evans function at $0$, but evaluate it up to $\pm i 10^{-6}$.
\item One contour which surrounds a semi-annular region in the right half-plane with two semi-circles with radii $5$ and $1.5 \cdot 10^4$, center at $\lambda = 0$ and vertical segment on the imaginary axis. 
\end{enumerate}
Along the first contour we integrate the Kato ODE using $4 \cdot 10^4$ points, while along the second one we use $10^6$ points with higher density near the origin.
 Then, using these initial conditions, we compute $E(\lambda)$ with the stiff solver ode15s in matlab, with relative tolerance $10^{-6}$ and, as said before, we set $L_1=40$. 
Finally, we apply the symmetry of $E(\bar{\lambda})=\overline{E(\lambda)}$.
The Evans function $E(\lambda)$ is plotted in Figures \ref{figure_evans_fun_2} and \ref{figure_evans_fun_2b} and its winding number is (approximately) 0, giving a numerical evidence of point spectrum stability.

Moreover, we present a computation of the Evans function along a contour, surrounding a semi-annular region with radii $10^{-6}$ and $1.5 \cdot 10^4$, center at $\lambda = 0$ and vertical segment on the imaginary axis.  Along the contour we integrate the Kato ODE with $10^6$ points; see  Figure \ref{evans_function_annular_small_circle}. Again, the winding number of the Evans function is (numerically) 0.

Finally, we note that  if $\lambda \in \mathbb{R}$, then $E(\lambda) \in \mathbb{R}$. Our numerics agrees with this simple observation, because we get  $\Im{E(\lambda)} \approx 0$ for $\lambda = 1.5\cdot10^4$. Moreover, to corroborate this accuracy, we also use the Cauchy integral formula
\begin{equation*}
E(a) = \frac{1}{2 \pi i}\int_{\Gamma}\frac{E(z)}{z-a}dz
\end{equation*}
for $a$ inside the contour $\Gamma$ surrounding the semi-annular region with inner radius $10^{-6}$, and,  for $a=1.5\cdot10^4-20$, we get a relative error less than $5\cdot10^{-4}$.

\begin{figure}[h]
\begin{center}
\centering
\includegraphics[scale=0.6]{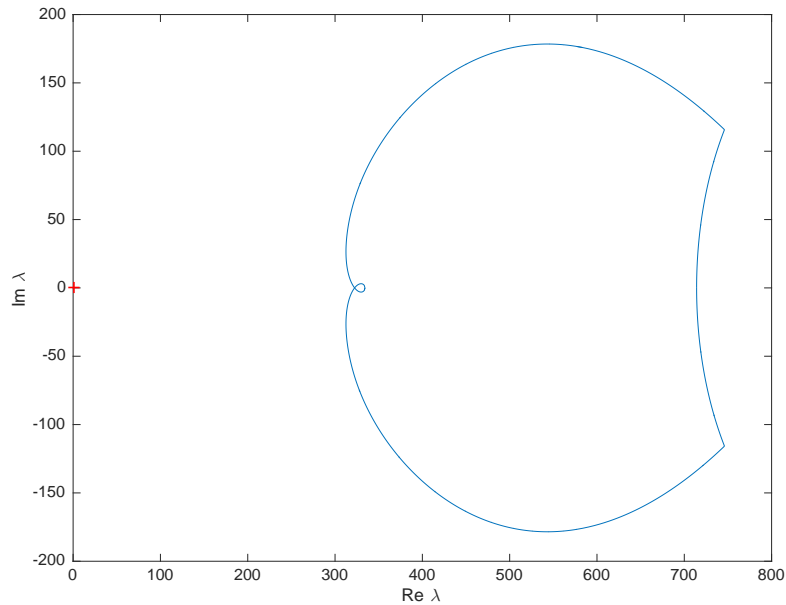}
\caption{The image of a semi-circular contour with radius $10$ through the Evans function $E(\lambda)$. The origin is marked in red.}
\label{figure_evans_fun_2}
\end{center}
\end{figure}
\begin{figure}[h]
\begin{center}
\centering
\includegraphics[scale=0.6]{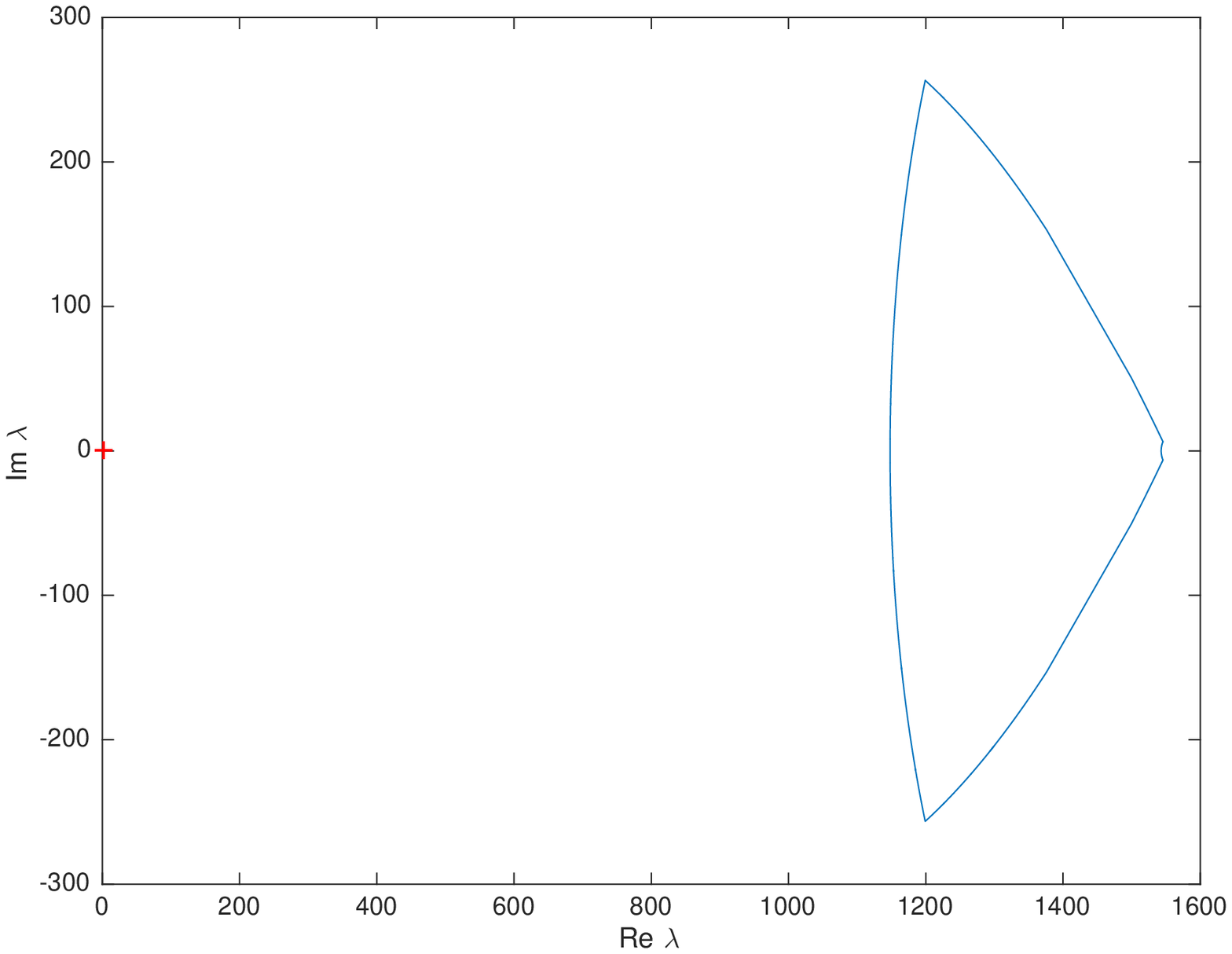}
\caption{The image of a contour, surrounding a semi-annular region with radii $5$ and $1.5\cdot10^4$ through $E(\lambda)$. The origin is marked in red.}
\label{figure_evans_fun_2b}
\end{center}
\end{figure}
\begin{figure}[h]
\centering
\includegraphics[scale=0.6]{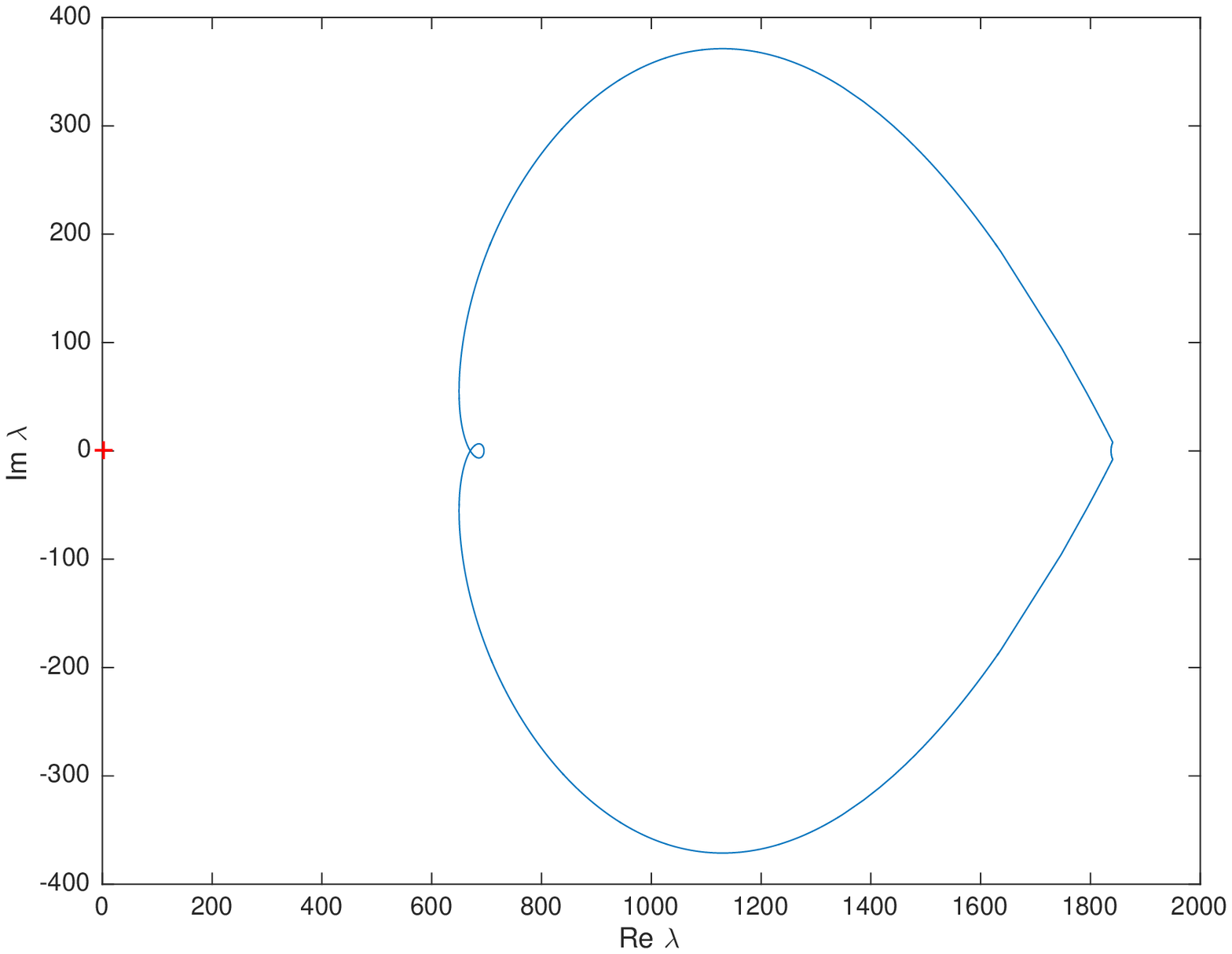}
\caption{The image of a contour, surrounding a semi-annular region with radii $10^{-6}$ and $1.5\cdot10^4$ through $E(\lambda)$. The origin is marked in red.}
\label{evans_function_annular_small_circle}
\end{figure}


\end{document}